
\documentclass{article}


\usepackage{mathptmx}
\usepackage{eucal}
\usepackage{amsmath}
\usepackage{amscd}
\usepackage{amssymb}
\usepackage{amsthm}
\usepackage{xspace}
\usepackage[all,tips]{xy}
\usepackage[dvips]{graphicx}
\usepackage{verbatim}
\usepackage{syntonly}
\usepackage{hyperref}


\theoremstyle{plain}
\newtheorem{thm}{Theorem}[section]

\newtheorem{prop}[thm]{Proposition}
\newtheorem{lemma}[thm]{Lemma}

\theoremstyle{definition}

\newenvironment{pf}
{\begin{proof}} {\end{proof}}


\DeclareMathOperator{\Aut}{Aut} \DeclareMathOperator{\Out}{Out}
\DeclareMathOperator{\Mod}{Mod}\DeclareMathOperator{\PMod}{PMod}
\DeclareMathOperator{\Inn}{Inn}

\DeclareMathOperator{\Push}{Push}


\newcommand{\vp}{\varphi}



\newcommand{\ol}{\overline}

\newcommand{\nid}{\noindent}




\newcommand{\innp}[1]{\left< #1 \right>}
\newcommand{\abs}[1]{\left\vert#1\right\vert}
\newcommand{\set}[1]{\left\{#1\right\}}

\newcommand{\co}{\colon}


\newcommand{\bop}{\bigoplus}

\newcommand{\lra}{\longrightarrow}


\newcommand{\B}[1]{\ensuremath{\mathbf{#1}}}

\newcommand{\Cal}[1]{\ensuremath{\mathcal{#1}}}


\usepackage{fancyhdr}

\pagestyle{fancy}
\fancyhead[R]{\bfseries}
\fancyhead[L]{\bfseries Congruence subgroup problem for braid groups: Thurston's proof}
\fancypagestyle{plain}

\begin{document}

\title{The congruence subgroup problem for \\ pure braid groups: Thurston's proof}
\author{D. B. McReynolds\thanks{Partially supported by an NSF postdoctoral fellowship. In addition, part of this work was done while at the California Institute of Technology.}}
\maketitle


\begin{abstract}
\nid In this article we present an unpublished proof of W. Thurston that pure braid groups have the congruence subgroup property.
\end{abstract}

\section{Introduction}\label{Intro}

\nid Let $S_{g,n}$ denote a surface of genus $g$ with $n$ punctures. The \emph{pure mapping class group} $\PMod(S_{g,n})$ of $S_{g,n}$ is the subgroup of the group $\textrm{Diffeo}^+(S_{g,n})/\textrm{Diffeo}_0^+(S_{g,n})$ of orientation preserving diffeomorphisms that fix each puncture modulo isotopy; this definition differs from the notion of a pure mapping class when $g=0$. The Dehn--Nielsen Theorem (see \cite[Theorem 3.6]{FM} for instance) affords us with an injection of $\PMod(S_{g,n})$ into the outer automorphism group $\Out(\pi_1(S_{g,n}))$. Being a subgroup of $\Out(S_{g,n})$, the pure mapping class group $\PMod(S_{g,n})$ is endowed with a class of finite index subgroups called congruence subgroups. For each characteristic subgroup $K$ of $\pi_1(S_{g,n})$, we have an induced homomorphism $\PMod(S_{g,n}) \to \Out(\pi_1(S_{g,n}/K))$. When $K$ is finite index, the kernel of the induced homomorphism is a finite index subgroup of $\PMod(S_{g,n})$. These subgroups are called \emph{principal congruence subgroups} (see Section \ref{Sec2} for a more general discussion) and any finite index subgroup of $\PMod(S_{g,n})$ containing a principal congruence subgroup is called a \emph{congruence subgroup}. The purpose of this article is to address the following problem sometimes called the \emph{congruence subgroup problem} (see \cite{BL}, \cite{Ivanov}):

\nid \textbf{Congruence Subgroup Problem}. \emph{Is every finite index subgroup of $\PMod(S_{g,n})$ a congruence subgroup?}

\nid The congruence subgroup problem for pure mapping class groups $\PMod(S_{g,n})$ is a central problem for understanding $\Mod(S_{g,n})$ and $\PMod(S_{g,n})$. A positive answer allows one a means of understanding the finite index subgroup structure of $\Mod(S_{g,n})$ and thus profinite completion of $\Mod(S_{g,n})$. A few potential applications are a more precise understanding of the subgroup growth asymptotics for $\Mod(S_{g,n})$ and a better understanding of the absolute Galois group $\textrm{Gal}(\ol{\mathbf{Q}}/\mathbf{Q})$ via its action on the profinite completion of $\Mod(S_{g,n})$. The first case to be resolved was for $g=0, n>0$ by Diaz--Donagi--Harbater \cite{Num} in 1989 (though explicitly stated in the article only for $n=4$). Asada \cite[Theorem 3A, Theorem 5]{Asada} gave a proof for $g=0,1$ and $n>0$ in 2001 (for $g=1$, see also \cite{BER} and \cite{EM}). Boggi \cite[Theorem 6.1]{Boggi1} claimed a general solution to the congruence subgroup problem in 2006. However, a gap in \cite[Theorem 5.4]{Boggi1} was discovered by Abromovich, Kent, and Wieland\footnote{The gap was discovered by D. Abromovich, R. Kent, and B. Wieland while Abromovich prepared a review of this article for MathSciNet. They informed Boggi of the gap which he acknowledged in \cite[p. 3]{Boggi2}.} (see the forthcoming articles \cite{Kent,BK} for more on this). Boggi \cite[Theorem 3.5]{Boggi2} has since claimed to handle the cases of $g=0,1,2$ (with $n>0,n>0,n\geq 0$, resp.). All of these proofs are in the language of algebraic geometry, field extensions, and profinite groups. In contrast, in 2002 W. Thurston \cite{Thurston} outlined an explicit, elementary proof for $g=0$ that followed the general strategy given in \cite{Asada, Boggi1,Boggi2}. This article gives a detailed account based on \cite{Thurston}. For future reference, we state the result here.

\begin{thm}\label{Main}
$\PMod(S_{0,n})$ has the congruence subgroup property.
\end{thm}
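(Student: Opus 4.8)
The plan is to argue by induction on the number of punctures $n$, the engine being the Birman exact sequence for the map forgetting a puncture,
$$1 \lra \pi_1(S_{0,n-1}) \overset{\Push}{\lra} \PMod(S_{0,n}) \overset{\phi}{\lra} \PMod(S_{0,n-1}) \lra 1,$$
in which $\Push$ is point-pushing and $\pi_1(S_{0,n-1})$ is free of rank $n-2$. The induction starts at $n\le 3$, where $\PMod(S_{0,n})$ is trivial. The first case with content is $n=4$: there $\phi$ has trivial target, so $\PMod(S_{0,4})\cong\pi_1(S_{0,3})$ is free of rank $2$, and one must show that every finite-index subgroup of this free group is a congruence subgroup of $\PMod(S_{0,4})$ --- this is the theorem of \cite{Num}, and also the $n=4$ instance of the Key Lemma below.

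For the inductive step, fix a finite-index $\Gamma\le\PMod(S_{0,n})$; replacing $\Gamma$ by its normal core, we may assume $\Gamma$ is normal, since it then suffices to find a principal congruence subgroup inside $\Gamma$. Set $N\bdef\Gamma\cap\Push(\pi_1(S_{0,n-1}))$, a finite-index subgroup of the free group $\pi_1(S_{0,n-1})$, and note that $\phi(\Gamma)$ has finite index in $\PMod(S_{0,n-1})$. By the inductive hypothesis, $\phi(\Gamma)$ contains a principal congruence subgroup $\PMod(S_{0,n-1})[L]$ with $L\triangleleft\pi_1(S_{0,n-1})$ characteristic of finite index. Pulling $L$ back along the puncture-filling surjection $\pi_1(S_{0,n})\twoheadrightarrow\pi_1(S_{0,n-1})$ and passing to a characteristic core yields a characteristic finite-index $K_0\triangleleft\pi_1(S_{0,n})$ with $\PMod(S_{0,n})[K_0]\le\phi^{-1}\big(\PMod(S_{0,n-1})[L]\big)\le\phi^{-1}(\phi(\Gamma))$; so the ``base direction'' is already captured by a congruence subgroup. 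What remains is the ``fiber direction'': to produce a characteristic finite-index $K\le K_0$ in $\pi_1(S_{0,n})$ with $\PMod(S_{0,n})[K]\le\Gamma$, and for this the key issue is to control $\PMod(S_{0,n})[K]\cap\Push(\pi_1(S_{0,n-1}))$.

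That control is the content of the following Key Lemma, which is the technical heart of the proof: as $K$ ranges over the characteristic finite-index subgroups of $\pi_1(S_{0,n})$, the kernels
$$\ker\!\big(\pi_1(S_{0,n-1})\overset{\Push}{\lra}\PMod(S_{0,n})\lra\Out(\pi_1(S_{0,n})/K)\big)$$
are cofinal, under reverse inclusion, among the finite-index subgroups of $\pi_1(S_{0,n-1})$; equivalently, point-pushing together with reduction modulo characteristic subgroups detects every finite cover of the $(n{-}1)$-punctured sphere. Granting this, one picks $K\le K_0$ for which the above kernel lies inside $N$, computes the point-pushing action on $\pi_1(S_{0,n})$ from the explicit conjugation formula for $\Push$, and assembles the base and fiber data to conclude $\PMod(S_{0,n})[K]\le\Gamma$, completing the induction. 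The main obstacle is the Key Lemma itself: its analogue for surfaces of positive genus is exactly the sort of statement that remains open, so the genus-zero hypothesis is essential here --- finite covers of a punctured sphere admit an explicit combinatorial and hyperbolic-geometric description, and this is what Thurston exploits to realize them by point-pushing. A secondary but genuine technical point is the bookkeeping needed to merge the base-direction and fiber-direction data into a single characteristic subgroup $K\triangleleft\pi_1(S_{0,n})$, since the relevant group extensions need not split compatibly.
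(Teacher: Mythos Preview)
Your overall architecture matches the paper's: induction on $n$ via the Birman sequence, the base case $n=4$ reducing to the Key Lemma, and the inductive step split into a ``base'' piece (handled by the hypothesis on $\PMod(S_{0,n-1})$) and a ``fiber'' piece (handled by the Key Lemma). Your Key Lemma is exactly the paper's Proposition~\ref{MainProp}. So the skeleton is right.

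However, you have misidentified where the genus-zero hypothesis enters and what Thurston's argument for the Key Lemma actually is. The paper proves the Key Lemma for \emph{every} genus (Proposition~\ref{MainPropClosedCase}): $\Push(\pi_1(S_{g,n-1}))$ has the congruence subgroup property whenever the Birman kernel is nontrivial. The proof is purely algebraic, with no hyperbolic geometry and no special combinatorics of sphere covers. The mechanism is the homomorphism $\delta\colon \Out_c(\pi_1(S_{g,n}))\to\Aut_c(\pi_1(S_{g,n-1}))$ satisfying $\mu=\delta\circ\Push$ (Lemma~\ref{SequenceConnectionLemma}), together with a finite-quotient version $\overline{\delta}$; one then needs only (i) that the centralizer of $q(\lambda)$ lies in the image of $N_\lambda$, arranged by a group-algebra trick, and (ii) that finite quotients of $\pi_1(S_{g,n-1})$ can be made centerless (Lemma~\ref{CenterlessLemma}). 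The genus-zero assumption is used \emph{only} to anchor the induction: $\PMod(S_{0,3})$ is trivial, whereas for $g\ge 1$ there is no such free base case. Your claim that ``its analogue for surfaces of positive genus is exactly the sort of statement that remains open'' is therefore backwards --- what is open in higher genus is the base of the induction, not the fiber lemma.

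A smaller point: your assembly of the base and fiber data is not quite right as written. Choosing $K\le K_0$ and hoping $\PMod(S_{0,n})[K]\le\Gamma$ fails, because from $\tau\in\PMod(S_{0,n})[K]$ and $\phi(\tau)\in\phi(\Gamma)$ you can write $\tau=s\gamma$ with $s\in\Push(\pi_1(S_{0,n-1}))$ and $\gamma\in\Gamma$, but there is no reason $\gamma\in\PMod(S_{0,n})[K]$, so you cannot force $s$ into the congruence kernel. The paper's fix (proof of Proposition~\ref{InductionProp}) is to take $\Lambda_0=\phi\bigl(\ker\rho_\Gamma^\ast\cap\Gamma\bigr)$ rather than $\phi(\Gamma)$, and to show that $\ker\rho_\Gamma^\ast\cap\phi^{-1}(\Lambda_0)\le\Gamma$; the resulting subgroup is then an intersection of two congruence subgroups, hence congruence by Lemma~\ref{L1}. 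You flagged this as bookkeeping, but it is a genuine step and your stated version does not go through.
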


\nid A few words are in order on how Thurston's proof compares to the proofs of Asada and Boggi. The proofs of Asada and Boggi are both short and elegant but use the language of profinite groups. Thurston's proof is longer but avoids the use of profinite groups and is essentially an explicit version of the proofs of Asada and Boggi. All three use the Birman exact sequence and use the fact that certain groups are centerless to control what one might call exceptional symmetries. All three use a homomorphism $\delta$ introduced below for this task. The merit of Thurston's proof is it's elementary nature; aside from Birman's work, the proof uses only elementary group theory. \smallskip\smallskip

\nid The second goal of this article is to introduce to a larger audience the simplicity of this result, be it Asada, Boggi, or Thurston's proof (see \cite{EM} for a better introduction to Diaz--Donagi--Harbater \cite{Num}). In addition, we hope to spark more interest in the general congruence subgroup problem for mapping class groups, a problem that is substantially more difficult than the simple case addressed here. Finally, we hope that those less familiar with the tools used in Asada, Boggi, and Diaz--Donagi--Harbater will see the potential for their methods, as in comparison to Thurston's proof, they provide a very simple and elegant framework for this problem.

\paragraph{Acknowledgements}

\nid I would first like to thank Nathan Dunfield for sharing with me Thurston's ideas. Most of my knowledge on this subject was gained from conversations with Dunfield and Chris Leininger, and I am deeply appreciative of the time both gave to me on this topic. I would also like to acknowledge the hard work of Dan Abromovich, Richard Kent IV, and Ben Wieland on reading \cite{Boggi1}. I would like to give Kent special thanks for several conversations on this article and on \cite{Boggi1, Boggi2}. I would like to thank Jordan Ellenberg for pointing out \cite{Num}, and Tom Church, Ellenberg, Benson Farb, Kent, Andy Putman, Justin Sinz, and the referees for several useful and indispensable comments on this article. Finally, I would like to thank Bill Thurston for allowing me to use the ideas presented in this article.

\section{Preliminaries}\label{Sec2}

\nid For a group $G$, the automorphism group of $G$ will be denoted by $\Aut(G)$. The normal subgroup of inner automorphisms will be denoted by $\Inn(G)$, and the group of outer automorphisms $\Aut(G)/\Inn(G)$ will be denoted by $\Out(G)$. For an element $g \in G$, the $G$--conjugacy class of $g$ will be denoted by $[g]$. The subgroup of $G$ generated by a set of elements $g_1,\dots,g_r$ will be denoted by $\innp{g_1,\dots,g_r}$. The center of $G$ will be denoted by $Z(G)$ and the centralizer of an element $g$ will be denoted by $C_G(g)$.

\paragraph{1. Congruence subgroups}

\nid Let $G$ be a finitely generated group and $\Lambda$ a subgroup of $\Aut(G)$ (resp. $\Out(G)$). We say that a normal subgroup $H$ of $G$ is \emph{$\Lambda$--invariant} if $\lambda(H)<H$ for all $\lambda$ in $\Lambda$. For such a subgroup, the canonical epimorphism
\[ \rho_H\co G \lra G/H \]
induces a homomorphism
\[ \rho_H^\star\co \Lambda \lra \Aut(G/H) \quad \text{(resp. }\rho_H^\ast\co \Lambda \lra \Out(G/H)\text{)} \]
defined via the formula
\[ \rho_H^\star(\psi)(gH) = \psi(g)H. \]
When $H$ is finite index, $\ker \rho_H^\star$ (resp. $\ker \rho_H^\ast$) is finite index in $\Lambda$ and is called a \emph{principal congruence subgroup}. Any subgroup of $\Lambda$ that contains a principal congruence subgroup is called a \emph{congruence subgroup}. We say that $\Lambda$ has the \emph{congruence subgroup property} if every finite index subgroup of $\Lambda$ is a congruence subgroup (see Bass--Lubotzky \cite{BL} for other examples of congruence subgroup problems).\smallskip\smallskip

\nid The following lemma will be useful throughout this article.

\begin{lemma}\label{L1}
The finite intersection of congruence subgroups is a congruence subgroup.
\end{lemma}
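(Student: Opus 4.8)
The plan is to reduce the claim to the case of two congruence subgroups and then induct on the number of subgroups in the intersection. So suppose $\Lambda_1$ and $\Lambda_2$ are congruence subgroups of $\Lambda$. By definition, each $\Lambda_i$ contains a principal congruence subgroup; say $\Lambda_1 \supset \ker \rho_{H_1}^\ast$ and $\Lambda_2 \supset \ker \rho_{H_2}^\ast$, where $H_1, H_2$ are finite index $\Lambda$--invariant (characteristic, in the relevant application) subgroups of $G$. The natural candidate for a principal congruence subgroup contained in $\Lambda_1 \cap \Lambda_2$ is $\ker \rho_{H}^\ast$ where $H = H_1 \cap H_2$.

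The key steps are then: first, verify that $H = H_1 \cap H_2$ is again a finite index $\Lambda$--invariant subgroup of $G$ — finite index is clear since both $H_i$ are, and $\Lambda$--invariance follows because $\lambda(H_1 \cap H_2) \subset \lambda(H_1) \cap \lambda(H_2) \subset H_1 \cap H_2$ for every $\lambda \in \Lambda$. Second, show that $\ker \rho_H^\ast \subset \ker \rho_{H_i}^\ast$ for $i = 1, 2$. This is the heart of the argument: the inclusion $H \hookrightarrow H_i$ induces a canonical epimorphism $G/H \to G/H_i$, and one checks that an automorphism (resp. outer automorphism) of $G$ acting trivially on $G/H$ necessarily acts trivially on the quotient $G/H_i$; more precisely, if $\psi \in \Lambda$ satisfies $\psi(g)H = gH$ for all $g$, then a fortiori $\psi(g)H_i = gH_i$, so $\rho_{H_i}^\ast(\psi)$ is inner (in fact trivial in $\Aut(G/H_i)$, hence trivial in $\Out(G/H_i)$). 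Hence $\ker \rho_H^\ast \subset \ker \rho_{H_1}^\ast \cap \ker \rho_{H_2}^\ast \subset \Lambda_1 \cap \Lambda_2$, so $\Lambda_1 \cap \Lambda_2$ contains a principal congruence subgroup and is therefore a congruence subgroup. Finally, an obvious induction extends this to any finite intersection $\Lambda_1 \cap \cdots \cap \Lambda_r$.

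The only genuinely delicate point is the passage through $\Out$ rather than $\Aut$: one must make sure that the map $\rho_{H_i}^\ast$ is well defined on the image of $\ker \rho_H^\ast$ and that ``acts trivially'' is preserved under the quotient map $G/H \to G/H_i$, which amounts to the remark above that a trivial action on a larger quotient forces a trivial action on a smaller one — this is immediate once one writes out the definitions, so I do not anticipate a real obstacle. The rest is bookkeeping about finite index and characteristic (or $\Lambda$--invariant) subgroups, all of which is routine.
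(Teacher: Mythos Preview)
Your proof is correct and follows essentially the same route as the paper's: both take $H=H_1\cap H_2$ (the paper writes $\Delta=\Gamma_1\cap\Gamma_2$) and show that the associated principal congruence subgroup is contained in $\ker\rho_{H_1}^\ast\cap\ker\rho_{H_2}^\ast$. The only cosmetic difference is that the paper packages the argument via the embedding $G/H\hookrightarrow G/H_1\times G/H_2$ and its two projections, whereas you phrase it directly in terms of the surjections $G/H\to G/H_i$; these are two sides of the same observation.
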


\begin{pf}
It suffices to prove the lemma for a pair of principal congruence subgroups $\ker \rho_{\Gamma_1}^\star$ and $\ker \rho_{\Gamma_2}^\star$. Set $Q_j = \Gamma/\Gamma_j$, $\Delta = \Gamma_1 \cap \Gamma_2$, and $Q = \Gamma/\Delta$. We have the commutative diagram
\begin{equation}\label{CongruenceIntersectionDiagram}
\xymatrix{ & \Gamma \ar[dd]^{\rho_\Delta} \ar[ld]_{\rho_{\Gamma_1}} \ar[rd]^{\rho_{\Gamma_2}} & \\ Q_1 &  & Q_2 \\ & Q, \ar[ru]_{\pi_2} \ar[lu]^{\pi_1} &}
\end{equation}
where the maps
\[ \pi_j\colon Q < Q_1 \times Q_2 \lra Q_j \]
are projection onto the $j$th factor. By definition
\[ \rho_\Delta^\star(\tau)(\gamma \Delta) = \rho_\Delta(\tau(\gamma))\Delta. \]
According to (\ref{CongruenceIntersectionDiagram}), we have
\[ \rho_\Delta(\gamma) = (\rho_{\Gamma_1}(\gamma),\rho_{\Gamma_2}(\gamma)). \]
If $\tau \in \ker \rho_{\Gamma_1}^\star \cap \rho_{\Gamma_2}^\star$, then
\[ (\rho_{\Gamma_1}(\tau(\gamma)),\rho_{\Gamma_2}(\tau(\gamma))) = (\rho_{\Gamma_1}(\gamma),\rho_{\Gamma_2}(\gamma)). \]
Therefore, $\tau \in \ker \rho_\Delta^\star$ and so
\[ \ker \rho_\Delta^\star < \ker \rho_{\Gamma_1}^\star \cap \ker \rho_{\Gamma_2}^\star. \]
The case when $\Lambda<\Out(\Gamma)$ is similar and yields the containment
\[ \ker \rho_\Delta^\ast < \ker \rho_{\Gamma_1}^\ast \cap \ker \rho_{\Delta_2}^\ast. \]
\end{pf}

\paragraph{2. Geometrically characteristic subgroups}

\nid For $\Lambda = \PMod(S_{g,n})$ and $G=\pi_1(S_{g,n})$, we call $\PMod(S_{g,n})$--invariant subgroups of $\pi_1(S_{g,n})$ \emph{geometrically characteristic subgroups}. We will denote the elements of $\pi_1(S_{g,n})$ generated by simple loops about the $n$ punctures by $\gamma_1,\dots,\gamma_n$. The subgroup of $\Aut(S_{g,n})$ that fixes each conjugacy class $[\gamma_j]$ will be denoted by $\Aut_c(\pi_1(S_{g,n}))$ and we set
\[ \Out_c(\pi_1(S_{g,n})) = \Aut_c(\pi_1(S_{g,n}))/\Inn(\pi_1(S_{g,n})). \]
The image of the pure mapping class group $\PMod(S_{g,n})$ afforded by the Dehn--Nielsen Theorem is a subgroup of $\Out_c(\pi_1(S_{g,n}))$. In the case when $g=0$, we list only the elements $\gamma_1,\dots,\gamma_{n-1}$ generated by simple loops about the punctures. For notational simplicity, we single out the element $\gamma_n$ (or $\gamma_{n-1}$ in the case $g=0$) and denote it simply by $\lambda$.

\paragraph{3. The Birman exact sequences}

\nid The normal closure of $\innp{\lambda}$ will be denoted by $N_\lambda$ and yields the short exact sequence
\[ \xymatrix{ 1 \ar[r] & N_\lambda \ar[r] & \pi_1(S_{g,n}) \ar[r]^{\rho_{N_\lambda}} & \pi_1(S_{g,n-1}) \ar[r] & 1}. \]
Since $N_\lambda$ is $\PMod(S_{g,n})$--invariant, $N_\lambda$ is geometrically characteristic and induces a short exact sequence
\[ \xymatrix{ 1 \ar[r] & K_\lambda \ar[r] & \PMod(S_{g,n}) \ar[r]^{\rho_{N_\lambda}^\ast} & \PMod(S_{g,n-1}) \ar[r] & 1}. \]
We also have the sequence
\begin{equation}\label{InnerSequence}
\xymatrix{1 \ar[r] & \pi_1(S_{g,n-1}) \ar[r]^\mu & \Aut_c(\pi_1(S_{g,n-1})) \ar[r]^{\theta} & \Out_c(\pi_1(S_{g,n-1})) \ar[r] & 1,}
\end{equation}
where $\mu(\eta)$ is the associated inner automorphism given by conjugation by $\eta$. These two sequences are related via a homomorphism
\[ \delta\colon \Out_c(\pi_1(S_{g,n})) \lra \Aut_c(\pi_1(S_{g,n-1})). \]
The map $\delta$ is given as follows. First, we select a normalized section of $\theta$
\[ s\colon \Out_c(\pi_1(S_{g,n})) \lra \Aut_c(\pi_1(S_{g,n})) \]
by sending an outer automorphism $\tau$ to an automorphism $s(\tau)$ such that $s(\tau)(\lambda) = \lambda$. The selection of $s(\tau)$ is unique up to right multiplication by the subgroup $\innp{\mu(\lambda)}$ of $\Inn(\pi_1(S_{g,n}))$. As $N_\lambda$ is $\Aut_c(\pi_1(S_{g,n}))$--invariant, we have an induced homomorphism
\[ \rho_{N_\lambda}^\star \colon \Aut_c(\pi_1(S_{g,n})) \lra \Aut_c(\pi_1(S_{g,n-1})), \]
and define $\delta$ by
\[ \delta(\tau) = \rho_{N_\lambda}^\star(s(\tau)). \]
Since the choice of $s$ is unique up to multiplication by the subgroup $\innp{\mu(\lambda)}$ and $\rho_{N_\lambda}^\star(\mu(\lambda)) = 1$, the map $\delta$ is a homomorphism. Under $\delta$, the subgroup $K_\lambda$ must map into $\Inn(\pi_1(S_{g,n-1}))$ since the projection to $\Out_c(\pi_1(S_{g,n-1}))$ is trivial. In fact, there exists an isomorphism
\[ \textrm{Push}\colon \pi_1(S_{g,n-1}) \lra K_\lambda, \]
and the result is the Birman exact sequence (see \cite[Theorem 4.5]{FM} for instance)
\begin{equation}\label{2}
\xymatrix{ 1 \ar[r] & \pi_1(S_{g,n-1}) \ar[r]^\Push & \PMod(S_{g,n}) \ar[r]^{\rho_{N_\lambda}^\ast} & \PMod(S_{g,n-1}) \ar[r] & 1.}
\end{equation}
The aforementioned relationship between the sequences (\ref{InnerSequence}) and (\ref{2}) given by $\delta$ is the content of our next lemma (see for instance \cite[p. 130]{Asada}).

\begin{lemma}\label{SequenceConnectionLemma}
$\mu = \delta \circ \Push$.
\end{lemma}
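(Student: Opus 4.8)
The plan is to unwind all the definitions and track a single element $\eta \in \pi_1(S_{g,n-1})$ through both sides of the claimed equality $\mu = \delta \circ \Push$. On the right-hand side, $\Push(\eta)$ is an element of $K_\lambda \subset \PMod(S_{g,n})$, which under the Dehn--Nielsen identification is an outer automorphism class in $\Out_c(\pi_1(S_{g,n}))$; applying $\delta$ means choosing the normalized section value $s(\Push(\eta))$, an automorphism of $\pi_1(S_{g,n})$ fixing $\lambda$, and then pushing forward along $\rho_{N_\lambda}\colon \pi_1(S_{g,n}) \to \pi_1(S_{g,n-1})$. On the left-hand side, $\mu(\eta)$ is simply conjugation by $\eta$ on $\pi_1(S_{g,n-1})$. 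So the whole statement reduces to the assertion that the point-pushing mapping class $\Push(\eta)$, when lifted to a $\lambda$-fixing automorphism of $\pi_1(S_{g,n})$ and then reduced modulo $N_\lambda$, is exactly conjugation by (the image of) $\eta$.

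First I would pin down the standard model for point-pushing: filling in the puncture corresponding to $\lambda$ exhibits $\pi_1(S_{g,n})$ as an extension of $\pi_1(S_{g,n-1})$ by the normally generated subgroup $N_\lambda$ — in fact $\pi_1(S_{g,n})$ is the fundamental group of the punctured surface, and $N_\lambda$ is the kernel of forgetting that puncture. The point-pushing map $\Push(\eta)$ is, concretely, the result of dragging the extra puncture around a loop representing $\eta$; its effect on $\pi_1(S_{g,n})$ is well known (see \cite[Fact 4.7]{FM}) to be an inner automorphism on the image side, i.e. after projecting to $\pi_1(S_{g,n-1})$ it becomes conjugation by $\eta$. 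The point is that $\rho_{N_\lambda}^\star$ applied to \emph{any} automorphism representing the mapping class $\Push(\eta)$ already lands in $\Inn(\pi_1(S_{g,n-1}))$ — this is exactly the observation, recorded in the excerpt, that $K_\lambda$ maps into $\Inn(\pi_1(S_{g,n-1}))$ under $\delta$ — and one must identify \emph{which} inner automorphism.

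The key steps, in order: (1) fix a representative automorphism $\vp \in \Aut_c(\pi_1(S_{g,n}))$ of the mapping class $\Push(\eta)$ and recall from the structure of point-pushing that $\rho_{N_\lambda}^\star(\vp)$ is conjugation by $\rho_{N_\lambda}(\wh\eta)$, where $\wh\eta$ is a lift of $\eta$ determined by $\vp$; (2) observe that passing to the normalized section replaces $\vp$ by $s(\Push(\eta)) = \vp \circ \mu(\lambda)^k$ for the unique $k$ making the image of $\lambda$ fixed, and since $\rho_{N_\lambda}^\star(\mu(\lambda)) = 1$ this does not change $\rho_{N_\lambda}^\star(\vp)$; hence $\delta(\Push(\eta)) = \rho_{N_\lambda}^\star(\vp)$ is still conjugation by $\rho_{N_\lambda}(\wh\eta)$; (3) check that the normalization conventions (the normalized section, and the standard identification $\Push$ from the Birman sequence) are chosen so that $\rho_{N_\lambda}(\wh\eta)$ is exactly $\eta$ and not $\eta$ times a stray power of a peripheral element — this is a compatibility-of-basepoints bookkeeping check. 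Combining (1)--(3) gives $\delta(\Push(\eta)) = \mu(\eta)$, which is the lemma.

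The main obstacle I expect is step (3): making sure the two independent normalizations — the ad hoc one defining $\delta$ (namely $s(\tau)(\lambda) = \lambda$) and the one implicit in the isomorphism $\Push$ coming from Birman's theorem — are genuinely compatible, so that the conjugating element comes out to be $\eta$ on the nose rather than a conjugate or a peripheral twist of it. This is purely a matter of carefully chasing basepoints and orientation conventions through the Birman exact sequence construction; there is no conceptual difficulty, but it is the step where signs and choices must be reconciled, and it may be cleanest to verify it by checking the identity on the generators $\gamma_1,\dots,\gamma_{n-2}$ of $\pi_1(S_{g,n-1})$ and the peripheral classes, using the explicit action of point-pushing on a standard generating set.
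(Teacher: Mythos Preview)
Your proposal is correct and matches the paper's approach: the paper does not give a detailed proof at all, merely remarking that the lemma is well known and that ``one proof is to check by direct computation that $\delta \circ \Push = \mu$\dots by verifying this functional equation for a standard generating set for $\pi_1(S_{g,n-1})$.'' Your steps (1)--(2) supply the conceptual scaffolding for why such a computation must succeed, and your step (3) --- with its suggestion to verify on generators --- is exactly the check the paper has in mind; your identification of the basepoint/normalization compatibility as the only real bookkeeping issue is apt.
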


\nid Lemma \ref{SequenceConnectionLemma} is well known and there are several ways to prove it. One proof is to check by direct computation that $\delta \circ \Push = \mu$. This can be done explicitly by verifying this functional equation for a standard generating set for $\pi_1(S_{g,n-1})$.\smallskip\smallskip

\nid We finish this section with the following useful lemma.

\begin{lemma}\label{L2}
The $\rho_{N_\lambda}^\ast$--pullback of a congruence subgroup is a congruence subgroup.
\end{lemma}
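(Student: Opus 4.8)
The plan is to reduce to a principal congruence subgroup and then pull back its defining subgroup along the surjection $\rho_{N_\lambda}\co \pi_1(S_{g,n}) \to \pi_1(S_{g,n-1})$. Let $C < \PMod(S_{g,n-1})$ be a congruence subgroup; by definition $C$ has finite index and contains a principal congruence subgroup $\ker \rho_H^\ast$, where $H < \pi_1(S_{g,n-1})$ is a finite-index geometrically characteristic subgroup. Since $\rho_{N_\lambda}^\ast$ is onto by sequence (\ref{2}), the pullback $(\rho_{N_\lambda}^\ast)^{-1}(C)$ again has finite index in $\PMod(S_{g,n})$, so it suffices to produce a principal congruence subgroup of $\PMod(S_{g,n})$ contained in $(\rho_{N_\lambda}^\ast)^{-1}(\ker \rho_H^\ast)$. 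Set $\widetilde H = \rho_{N_\lambda}^{-1}(H) < \pi_1(S_{g,n})$. Because $\ker \rho_{N_\lambda} = N_\lambda \su \widetilde H$, the map $\rho_{N_\lambda}$ induces an isomorphism $\pi_1(S_{g,n})/\widetilde H \cong \pi_1(S_{g,n})/N_\lambda \big/ (\widetilde H/N_\lambda) = \pi_1(S_{g,n-1})/H$; in particular $\widetilde H$ has finite index.

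The point that needs care --- though it is the heart of the matter --- is that $\widetilde H$ is geometrically characteristic, that is, $\PMod(S_{g,n})$--invariant; here the $\PMod(S_{g,n})$--invariance of $N_\lambda$ and the $\PMod(S_{g,n-1})$--invariance of $H$ both get used. Fix $\tau \in \PMod(S_{g,n})$ and a lift $\widehat\tau \in \Aut_c(\pi_1(S_{g,n}))$. Since $N_\lambda \su \widetilde H$ and $\widehat\tau(N_\lambda) = N_\lambda$, the correspondence between subgroups of $\pi_1(S_{g,n})$ containing $N_\lambda$ and subgroups of $\pi_1(S_{g,n})/N_\lambda = \pi_1(S_{g,n-1})$ is equivariant for $\widehat\tau$ and its induced automorphism $\rho_{N_\lambda}^\star(\widehat\tau) \in \Aut_c(\pi_1(S_{g,n-1}))$, so $\widehat\tau(\widetilde H) = \widetilde H$ if and only if $\rho_{N_\lambda}^\star(\widehat\tau)(H) = H$. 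But $\rho_{N_\lambda}^\star(\widehat\tau)$ represents the element $\rho_{N_\lambda}^\ast(\tau) \in \PMod(S_{g,n-1})$, and $H$ is $\PMod(S_{g,n-1})$--invariant, so indeed $\rho_{N_\lambda}^\star(\widehat\tau)(H) = H$. Hence $\widehat\tau(\widetilde H) = \widetilde H$ for every $\tau$ and every lift, $\widetilde H$ is geometrically characteristic, and $\ker \rho_{\widetilde H}^\ast$ is a principal congruence subgroup of $\PMod(S_{g,n})$.

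It remains only to place $\ker \rho_{\widetilde H}^\ast$ inside the pullback. Tracing the isomorphism $\pi_1(S_{g,n})/\widetilde H \cong \pi_1(S_{g,n-1})/H$, the automorphism of $\pi_1(S_{g,n-1})/H$ induced by $\widehat\tau$ coincides with the one induced by $\rho_{N_\lambda}^\star(\widehat\tau)$, which is a lift of $\rho_{N_\lambda}^\ast(\tau)$; passing to outer automorphisms this identifies $\rho_{\widetilde H}^\ast$ with $\rho_H^\ast \circ \rho_{N_\lambda}^\ast$, and in particular $\ker \rho_{\widetilde H}^\ast < (\rho_{N_\lambda}^\ast)^{-1}(\ker \rho_H^\ast) < (\rho_{N_\lambda}^\ast)^{-1}(C)$. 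Since $(\rho_{N_\lambda}^\ast)^{-1}(C)$ has finite index in $\PMod(S_{g,n})$ and contains the principal congruence subgroup $\ker \rho_{\widetilde H}^\ast$, it is a congruence subgroup, as desired. (Lemma \ref{L1} is not needed for this argument; only the displayed containment, not the full equality of the two kernels, is used.)
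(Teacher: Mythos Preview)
Your proof is correct and follows exactly the same approach as the paper: pull back the geometrically characteristic subgroup $\Delta$ (your $H$) along $\rho_{N_\lambda}$ to obtain $\rho_{N_\lambda}^{-1}(\Delta)$, observe this is geometrically characteristic in $\pi_1(S_{g,n})$, and identify its principal congruence with the $\rho_{N_\lambda}^\ast$--pullback. The paper's proof is a terse two-sentence sketch asserting these facts without justification; you have simply filled in the details the paper omits (the verification that $\widetilde H$ is $\PMod(S_{g,n})$--invariant and the identification $\rho_{\widetilde H}^\ast = \rho_H^\ast \circ \rho_{N_\lambda}^\ast$), and in fact your identification yields the paper's stronger claim of equality of kernels, not just the containment you use.
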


\begin{pf}
Given a principal congruence subgroup $\ker \rho_\Delta^\ast$ of $\PMod(S_{g,n-1})$ with associated geometrically characteristic subgroup $\Delta$ of $\pi_1(S_{g,n-1})$, the subgroup $\rho_{N_\lambda}^{-1}(\Delta)$ is a geometrically characteristic subgroup of $\pi_1(S_{g,n})$. The associated principal congruence is the $\rho_{N_\lambda}^\ast$--pullback of $\ker \rho_\Delta^\ast$.
\end{pf}

\section{Proof of Theorem \ref{Main}}

\nid The first and main step in proving Theorem \ref{Main} is the following (see also \cite[Lemma 2.6]{LS} for another proof of this proposition).

\begin{prop}\label{MainProp}
$\Push(\pi_1(S_{0,n-1}))$ has the congruence subgroup property.
\end{prop}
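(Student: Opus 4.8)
The plan is to reduce the statement to a concrete detection problem about finite covers of $S_{0,n}$, and then to solve that problem using the homomorphism $\delta$ together with the fact that $\pi_1(S_{0,n-1})$ is centreless. Throughout I identify $\pi_1(S_{0,n-1})$ with its image under $\Push$; we may assume $n\ge 4$, so that $\pi_1(S_{0,n-1})$ is a nonabelian free group. Since congruence subgroups are closed under finite intersection (Lemma~\ref{L1}) and under passage to overgroups, and since the finite-index characteristic subgroups of a finitely generated group are cofinal among all its finite-index subgroups, it suffices to prove: \emph{for every finite-index characteristic $M\trianglelefteq\pi_1(S_{0,n-1})$ there is a finite-index geometrically characteristic $K\trianglelefteq\pi_1(S_{0,n})$ with}
\[ \ker\rho_K^\ast\ \cap\ \Push(\pi_1(S_{0,n-1}))\ \le\ \Push(M), \]
that is, such that no point-push $\Push(\eta)$ with $\eta\notin M$ acts as an inner automorphism of $\pi_1(S_{0,n})/K$. (Alternatively one organizes everything as an induction on $n$, with $n=4$ as the base.)

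The key observation guiding the choice of $K$ is that $K$ \emph{must} satisfy $N_\lambda\not\le K$. Indeed, if $N_\lambda\le K$ then $\pi_1(S_{0,n})/K$ is a quotient of $\pi_1(S_{0,n-1})$, and by Lemma~\ref{SequenceConnectionLemma} ($\delta\circ\Push=\mu$) every $\Push(\eta)$ acts on it as the inner automorphism determined by $\eta$; then $\ker\rho_K^\ast$ swallows all of $\Push(\pi_1(S_{0,n-1}))$ and is useless. So the finite cover of $S_{0,n}$ attached to $K$ must not descend to a cover of $S_{0,n-1}$, so that $\pi_1(S_{0,n})/K$ records how a point-push winds about the forgotten puncture. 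Concretely, I would fix a finite-index normal $\bar K\trianglelefteq\pi_1(S_{0,n-1})$ with $\bar K\le M$ and $\pi_1(S_{0,n-1})/\bar K$ centreless (such $\bar K$ are cofinal, a nonabelian free group having arbitrarily small finite-index subgroups with centreless quotient), note that $\rho_{N_\lambda}^{-1}(\bar K)$ is a finite-index subgroup of $\pi_1(S_{0,n})$ preserved by every $\Push(\eta)$ — because $\Push(\eta)$ preserves $N_\lambda$ and induces conjugation by $\eta$ on $\pi_1(S_{0,n-1})$, which preserves $\bar K$ — and then take for $K$ a finite-index geometrically characteristic subgroup of $\pi_1(S_{0,n})$ inside $\rho_{N_\lambda}^{-1}(\bar K)$, chosen deep enough that $\lambda\notin K$ and that the centralizer of $\lambda K$ in $\pi_1(S_{0,n})/K$ maps, under the quotient $\pi_1(S_{0,n})/K\to\pi_1(S_{0,n-1})/\bar K$, into the centre of $\pi_1(S_{0,n-1})/\bar K$.

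Granting such a $K$, the verification is short. Suppose $\Push(\eta)$ acts on $\pi_1(S_{0,n})/K$ as conjugation by $gK$; apply this to the normalized lift $s(\Push(\eta))$, which fixes $\lambda$. First, $gK$ then centralizes $\lambda K$, so by the choice of $K$ the image of $g$ in $\pi_1(S_{0,n-1})/\bar K$ is central. Second, projecting the identity that $s(\Push(\eta))$ reduces mod $K$ to conjugation by $gK$ further down to $\pi_1(S_{0,n-1})/\bar K$, and using $\delta\circ\Push=\mu$ once more, the images of $g$ and of $\eta$ there induce the same conjugation, hence differ by a central element. Combining the two facts, the image of $\eta$ in $\pi_1(S_{0,n-1})/\bar K$ is central, so it is trivial, i.e.\ $\eta\in\bar K\le M$. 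This is the desired containment, and with the reductions above it proves the proposition.

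The main obstacle — and the reason Thurston's otherwise elementary argument is long — is making good on the choice of $K$: one must show that in a suitable deep finite quotient $\pi_1(S_{0,n})/K$ the centralizer of the image of the puncture-loop $\lambda$ is no larger than the ``obvious'' one, so that an inner-acting point-push is forced to have no winding about the forgotten puncture and trivial image downstairs. This rests on the explicit formula for the point-pushing automorphism on a standard generating set of $\pi_1(S_{0,n})$, together with an elementary but careful analysis of centralizers in finite quotients of free groups; it is the explicit counterpart of the ``centreless'' inputs that Asada and Boggi use at the profinite level.
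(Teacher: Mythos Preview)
Your reduction is correct and essentially coincides with the paper's Step~1: the requirement that the centralizer of $\lambda K$ in $\pi_1(S_{0,n})/K$ project trivially to $\pi_1(S_{0,n-1})/\bar K$ is exactly the hypothesis $C_Q(q(\lambda))<\ker\rho_{q(N_\lambda)}$ of Lemma~\ref{RedEquLemma} (composed with the further projection to a centerless quotient), and your short verification using $\delta\circ\Push=\mu$ is a clean repackaging of that lemma together with the last paragraph of the proof of Lemma~\ref{MainGroupLemma}. The passage to a centerless $\bar K$ is the paper's Lemma~\ref{CenterlessLemma}. So the architecture matches.

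The gap, which you yourself flag, is the existence of $K$. ``Chosen deep enough'' does not suffice here and is not merely a matter of bookkeeping: as $K$ shrinks, the fibers of $\pi_1(S_{0,n})/K\to\pi_1(S_{0,n-1})/\bar K$ over each nontrivial element grow, and one has to rule out centralizing elements in \emph{all} of them simultaneously. There is no cofinality or compactness shortcut that produces such a $K$ without either an explicit construction or the profinite centralizer theorem you allude to. The paper's decisive idea (its Step~2) is concrete: take $P=\pi_1(S_{0,n-1})/\bar K$ centerless, form the $\B{F}_\ell$--group algebra $U_\ell=\B{F}_\ell[P]$, and define
\[
\vp\colon \pi_1(S_{0,n})\lra U_\ell\rtimes P,\qquad \vp(\lambda)=(1,1),\quad \vp(\gamma_j)=(0,p(\gamma_j)).
\]
A one-line computation shows every $(v,p')$ commuting with $(1,1)$ has $p'=1$, so the centralizer of $\vp(\lambda)$ already lies in $U_\ell=\ker(\rho_{U_\ell})$. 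One then makes the kernel geometrically characteristic by intersecting over the $\Aut_c(\pi_1(S_{0,n}))$--orbit of $\vp$; since each translate sends $\lambda$ to a conjugate of $(1,1)$, the centralizer condition persists, and projection onto the $\vp$--factor recovers $p$, giving $\ker(\overline{\mu}\circ p_0)<\ker p$. This group-algebra trick is the missing ingredient in your proposal; once you insert it, your outline becomes the paper's proof.
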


\nid Using Proposition \ref{MainProp}, we will deduce the following inductive result, which is the second step in proving Theorem \ref{Main}.

\begin{prop}\label{InductionProp}
If $\PMod(S_{0,n-1})$ has the congruence subgroup property, then $\PMod(S_{0,n})$ has the congruence subgroup property.
\end{prop}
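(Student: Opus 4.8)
The plan is to exploit the Birman exact sequence \eqref{2} to transfer the congruence subgroup property from the two ``ends'' of the sequence—namely $\PMod(S_{0,n-1})$ (assumed) and $\Push(\pi_1(S_{0,n-1}))$ (Proposition \ref{MainProp})—to the middle group $\PMod(S_{0,n})$. So let $\Lambda < \PMod(S_{0,n})$ be an arbitrary finite index subgroup; we must produce a principal congruence subgroup contained in $\Lambda$. The natural first step is to ``split the problem across the sequence'': set $\Lambda_0 = \Lambda \cap \Push(\pi_1(S_{0,n-1}))$, a finite index subgroup of $\Push(\pi_1(S_{0,n-1}))$, and let $\overline{\Lambda} = \rho_{N_\lambda}^\ast(\Lambda)$, a finite index subgroup of $\PMod(S_{0,n-1})$. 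By Proposition \ref{MainProp}, $\Lambda_0$ contains a congruence subgroup of $\Push(\pi_1(S_{0,n-1}))$, and by hypothesis $\overline{\Lambda}$ is a congruence subgroup of $\PMod(S_{0,n-1})$; by Lemma \ref{L2} its $\rho_{N_\lambda}^\ast$--pullback is a congruence subgroup of $\PMod(S_{0,n})$. Using Lemma \ref{L1} (finite intersections of congruence subgroups are congruence), I would then try to show that a suitable congruence subgroup built from these two pieces lands inside $\Lambda$.

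The subtlety—and the place where I expect the real work to be—is that knowing a finite index subgroup of $\PMod(S_{0,n})$ both surjects (after applying $\rho_{N_\lambda}^\ast$) to a congruence subgroup and meets the kernel $\Push(\pi_1(S_{0,n-1}))$ in a congruence subgroup does \emph{not} immediately imply the subgroup itself is congruence; an extension of congruence-by-congruence need not be congruence without extra control. The standard device, and the one flagged in the introduction as common to all three proofs, is the homomorphism $\delta\colon \Out_c(\pi_1(S_{0,n})) \to \Aut_c(\pi_1(S_{0,n-1}))$ together with Lemma \ref{SequenceConnectionLemma} ($\mu = \delta \circ \Push$). The point is that a congruence subgroup of $\PMod(S_{0,n})$ defined by a geometrically characteristic $\Delta < \pi_1(S_{0,n})$ can be related, via $\delta$ and the section $s$, to congruence data on $\pi_1(S_{0,n-1})$; in particular one wants to see that the restriction of a principal congruence subgroup of $\PMod(S_{0,n})$ to $\Push(\pi_1(S_{0,n-1}))$ is, under the identification $\Push$, itself a principal congruence subgroup of $\pi_1(S_{0,n-1})$—this uses that $\rho_{N_\lambda}$ carries geometrically characteristic subgroups to geometrically characteristic subgroups and that $\delta$ intertwines the action on $K_\lambda$ with conjugation. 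This compatibility is what lets the two separate pieces of congruence data be amalgamated into a single principal congruence subgroup.

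Concretely, I would proceed as follows. First, choose a geometrically characteristic finite index $\Delta_1 < \pi_1(S_{0,n-1})$ whose associated principal congruence subgroup of $\PMod(S_{0,n-1})$ pulls back (via Lemma \ref{L2}) into $\overline{\Lambda}$, hence whose pullback $P_1 < \PMod(S_{0,n})$ satisfies $\rho_{N_\lambda}^\ast(P_1) < \overline{\Lambda}$. Second, apply Proposition \ref{MainProp} to obtain a geometrically characteristic finite index $\Delta_2 < \pi_1(S_{0,n-1})$ whose induced principal congruence subgroup of $\Push(\pi_1(S_{0,n-1}))$—transported through $\Push$—lies in $\Lambda_0$; the content here is to recognize that this is the restriction to $\Push(\pi_1(S_{0,n-1}))$ of a principal congruence subgroup $P_2 < \PMod(S_{0,n})$, which requires producing an honest geometrically characteristic subgroup of $\pi_1(S_{0,n})$ (one can take something like $\rho_{N_\lambda}^{-1}(\Delta_2)$ intersected with an appropriate characteristic complement, or use the $\delta$--picture to see that the ``Push-direction'' congruence data is detected by a characteristic subgroup upstairs). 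Third, set $P = P_1 \cap P_2$; by Lemma \ref{L1} it is a congruence subgroup. Finally, the endgame is a diagram chase in \eqref{2}: given $\tau \in P$, the image $\rho_{N_\lambda}^\ast(\tau)$ lies in $\overline{\Lambda}$, so $\tau$ differs from an element of $\Lambda$ by something in $\ker \rho_{N_\lambda}^\ast = \Push(\pi_1(S_{0,n-1}))$; but $P \cap \Push(\pi_1(S_{0,n-1})) < \Lambda_0 < \Lambda$, and provided $P$ was chosen so that $\rho_{N_\lambda}^\ast(P) < \rho_{N_\lambda}^\ast(\Lambda)$ with the fiber contained in $\Lambda$, one concludes $P < \Lambda$—possibly after passing to a further finite intersection with a pullback to ensure the coset representatives can be taken in $\Lambda$. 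The one genuinely delicate point throughout is step two: verifying that the congruence subgroup of the kernel supplied by Proposition \ref{MainProp} is actually the trace of a principal congruence subgroup of the ambient group $\PMod(S_{0,n})$, which is exactly where the centerless property of surface groups (ensuring $\mu$ is injective) and Lemma \ref{SequenceConnectionLemma} are indispensable.
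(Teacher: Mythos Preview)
Your overall strategy---use the Birman sequence, invoke Proposition \ref{MainProp} on the kernel and the inductive hypothesis on the quotient, then intersect and chase---is exactly the paper's. But the endgame chase, as you have written it, has a genuine gap, and your discussion of $\delta$, $\Delta_2$, and centerlessness is misplaced.

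First the gap. With your choices, take $\tau \in P = P_1 \cap P_2$. From $\rho_{N_\lambda}^\ast(\tau) \in \overline{\Lambda}$ you get $\tau = s\lambda'$ with $s \in \Push(\pi_1(S_{0,n-1}))$ and $\lambda' \in \Lambda$. You want $s \in \Lambda$, and you try to use $P_2 \cap \Push(\pi_1(S_{0,n-1})) < \Lambda$. But nothing forces $\lambda' \in P_2$, so you cannot conclude $s = \tau(\lambda')^{-1} \in P_2$. Your hedge (``pass to a further finite intersection with a pullback to ensure the coset representatives can be taken in $\Lambda$'') does not fix this: the representatives need to lie in $\Lambda \cap P_2$, not just in $\Lambda$. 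The paper's remedy is precise and simple: after replacing $\Lambda$ by a normal $\ker q$, take as the quotient-side subgroup not $\overline{\Lambda} = \rho_{N_\lambda}^\ast(\ker q)$ but $\Lambda_0 = \rho_{N_\lambda}^\ast(\ker \rho_\Gamma^\ast \cap \ker q)$. Then every element of $(\rho_{N_\lambda}^\ast)^{-1}(\Lambda_0)$ is $sk$ with $k \in \ker \rho_\Gamma^\ast \cap \ker q$; if in addition $sk \in \ker \rho_\Gamma^\ast$, then $s \in \ker \rho_\Gamma^\ast \cap \Push(\pi_1(S_{0,n-1})) < \Push(\ker p) < \ker q$, and $sk \in \ker q$. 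The inductive hypothesis is applied to this $\Lambda_0$ (a finite index subgroup of $\PMod(S_{0,n-1})$), and Lemma \ref{L2} plus Lemma \ref{L1} finish.

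Second, the misplacement. Proposition \ref{MainProp} is used here purely as a black box: it hands you a geometrically characteristic finite index $\Gamma < \pi_1(S_{0,n})$ with $\ker \rho_\Gamma^\ast \cap \Push(\pi_1(S_{0,n-1})) < \Push(\ker p)$. There is no auxiliary $\Delta_2 < \pi_1(S_{0,n-1})$ to manufacture, no need to ``recognize'' anything as the trace of a principal congruence subgroup (that is exactly what $\ker \rho_\Gamma^\ast$ already is), and no appeal to $\delta$, Lemma \ref{SequenceConnectionLemma}, or centerlessness. Those ingredients live entirely inside the proof of Proposition \ref{MainProp}; the paper stresses this by proving Proposition \ref{InductionProp} first, using only the \emph{statement} of Proposition \ref{MainProp}.
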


\nid We now give a quick proof of Theorem \ref{Main} assuming these results.

\begin{pf}[Proof of Theorem \ref{Main}]
The first non-trivial case occurs when $n=4$ where Proposition \ref{MainProp} and (\ref{2}) establish that $\PMod(S_{0,4})$ has the congruence subgroup property. Specifically, $\Push(\pi_1(S_{0,3})) = \PMod(S_{0,4})$. From this equality, one obtains Theorem \ref{Main} by employing Proposition \ref{InductionProp} inductively.
\end{pf}

\section{Proof of Proposition \ref{InductionProp}}

\nid As the proof of Proposition \ref{InductionProp} only requires the statement of Proposition \ref{MainProp}, we prove Proposition \ref{InductionProp} before commencing with the proof of Proposition \ref{MainProp}.

\begin{proof}[Proof of Proposition \ref{InductionProp}]
Given a finite index subgroup $\Lambda$ of $\PMod(S_{0,n})$, by passing to a normal finite index subgroup $\ker q < \Lambda$, it suffices to prove that $\ker q$ is a congruence subgroup. From $\ker q$, we obtain a surjective homomorphism
\[ q\co \PMod(S_{0,n}) \lra Q. \]
We decomposition $Q$ via the Birman exact sequence. Specifically, the Birman exact sequence (\ref{2}) produces a diagram
\begin{equation}\label{3}
\xymatrix{ 1 \ar[r] & \pi_1(S_{0,n-1}) \ar[r]^\Push \ar[d]_p & \PMod(S_{0,n}) \ar[r]^{\rho_{N_\lambda}^\ast} \ar[d]_q & \PMod(S_{0,n-1}) \ar[d]_r \ar[r] & 1 \\ 1 \ar[r] & P \ar[r] & Q \ar[r] & R \ar[r] & 1. }
\end{equation}
Note that since this diagram is induced from the Birman sequence, both $p,r$ are surjective homomorphisms though possibly trivial. According to Proposition \ref{MainProp}, there exists a homomorphism
\[ \rho_\Gamma\co \pi_1(S_{0,n}) \lra \pi_1(S_{0,n})/\Gamma \]
with finite index, geometrically characteristic kernel $\Gamma$ such that
\begin{equation}\label{CongruenceContainment}
\ker(\rho_\Gamma^\ast \circ \Push) < \Push(\ker p).
\end{equation}
As we are assuming $\PMod(S_{0,n-1})$ has the congruence subgroup property, by Lemma \ref{L1} and Lemma \ref{L2}, it suffices to find a finite index subgroup $\Lambda_0$ of $\PMod(S_{0,n-1})$ such that $\ker \rho_\Gamma^\ast \cap (\rho_{N_\lambda}^\ast)^{-1}(\Lambda_0)<\ker q$. The subgroup $\Lambda_0 = \rho_{N_\lambda}^\ast(\ker \rho_\Gamma^\ast \cap \ker q)$ is our candidate. We assert that
\[ \ker \rho_\Gamma^\ast \cap (\rho_{N_\lambda}^\ast)^{-1}(\rho_{N_\lambda}^\ast(\ker \rho_\Gamma^\ast \cap \ker q)) < \ker q. \]
To see this containment, first note that
\[ (\rho_{N_\lambda}^\ast)^{-1}(\rho_{N_\lambda}^\ast(\ker \rho_\Gamma^\ast \cap \ker q)) = \Push(\pi_1(S_{0,n-1}))\cdot(\ker \rho_\Gamma^\ast \cap \ker q). \]
Every element in the latter subgroup can be written in the form $sk$ where $s$ is an element of $\Push(\pi_1(S_{0,n-1}))$ and $k$ is an element of $\ker \rho_\Gamma^\ast \cap \ker q$. If $\gamma$ is an element of
\[ \ker \rho_\Gamma^\ast \cap (\Push(\pi_1(S_{0,n-1}))\cdot(\ker \rho_\Gamma^\ast \cap \ker q)), \]
then writing $\gamma = sk$, we see that since both $sk$ and $k$ are elements of $\ker \rho_\Gamma^\ast$, then so is $s$. In particular, it must be that $s$ is an element of $\ker \rho_\Gamma^\ast \cap \Push(\pi_1(S_{0,n-1}))$. By (\ref{CongruenceContainment}), we have
\[ \ker(\rho_\Gamma^\ast \circ \Push) = \ker \rho_\Gamma^\ast \cap \Push(\pi_1(S_{0,n-1})) < \Push(\ker p), \]
and so $s$ is an element of $\Push(\ker p)$. Therefore, we now know that
\[ \ker \rho_\Gamma^\ast \cap (\rho_{N_\lambda}^\ast)^{-1}(\rho_{N_\lambda}^\ast(\ker \rho_\Gamma^\ast \cap \ker q)) < \Push(\ker p)\cdot(\ker \rho_\Gamma^\ast \cap \ker q). \]
Visibly, any element of $\Push(\ker p) \cdot (\ker \rho_\Gamma^\ast \cap \ker q)$ is an element of $\ker q$, and so we have
\[ \ker \rho_\Gamma^\ast \cap (\rho_{N_\lambda}^\ast)^{-1}(\rho_{N_\lambda}^\ast(\ker \rho_\Gamma^\ast \cap \ker q)) < \ker q \]
as needed.
\end{proof}

\nid We note that the above proof makes no use of the assumption $g=0$, provided one knows Proposition \ref{MainProp} for $\Push(\pi_1(S_{g,n-1}))$.

\section{Proof of Proposition \ref{MainProp}}

\nid We now prove Proposition \ref{MainProp}. The proof is split into two steps. First, we reduce Proposition \ref{MainProp} to a purely group theoretic problem using Lemma \ref{SequenceConnectionLemma}. Using elementary methods, we then solve the associated group theoretic problem. Keep in mind that one of our main goals is keeping the proof of Theorem \ref{Main} as elementary as possible by which we mean to minimize the sophistication level of the mathematics involved and avoiding using results whose proofs require mathematics beyond an undergraduate algebra course. The trade off is that our arguments are longer. A good example of this trade off is our proof of Lemma \ref{CenterlessLemma} in comparison to \cite[Lemma 1]{Asada}, \cite[Lemma 2.6]{Boggi2}, or \cite[Proposition 2.7]{LS}.\smallskip\smallskip

\nid Given a finite index subgroup $\Gamma$ of $\pi_1(S_{0,n-1})$, we first pass to a finite index normal subgroup $\ker p$ of $\Gamma$ with associated homomorphism $p\colon \pi_1(S_{0,n-1}) \to P$. It suffices to show that $\ker p$ is a congruence subgroup and this will now be our goal.

\nid \textbf{Step 1.} We first describe congruence subgroups in $\pi_1(S_{0,n-1})$. Given a geometrically characteristic subgroup $\ker q$ of $\pi_1(S_{0,n})$ with associated homomorphism $q\colon \pi_1(S_{0,n}) \to Q$, we obtain a geometrically characteristic subgroup $\ker p_0$ of $\pi_1(S_{0,n-1})$ via the commutative diagram
\[ \xymatrix{ \pi_1(S_{0,n}) \ar[rr]^{\rho_{N_\lambda}} \ar[d]_q & & \pi_1(S_{0,n-1}) \ar[d]^{p_0} \\ Q \ar[rr]_{\rho_{q(N_\lambda)}} & & Q/q(N_\lambda) = P_0.} \]
In addition, we have the homomorphism
\[ \overline{\mu}\colon P_0 \lra \Inn(P_0) = P_0/Z(P_0), \]
where $Z(P_0)$ is the center of $P_0$. We would like, as before, to define a homomorphism
\[ \overline{\delta}\colon \Out_c(Q) \lra \Aut_c(P_0) \]
that relates $\rho_{\ker q}^\ast \circ \Push$ and $\overline{\mu} \circ p_0$. Proceeding as before, we define the map
\[ \overline{\delta}\colon \Out_c(Q) \lra \Aut_c(P_0). \]
Unfortunately, $\overline{\delta}$ need not be a homomorphism. To be precise, we set $\Aut_c(Q)$ to be the subgroup of $\Aut(Q)$ of automorphisms that preserve the conjugacy classes $[q(\gamma_1)],\dots[q(\gamma_{n-2})],[q(\lambda)]$ and $\Out_c(Q) = \Aut_c(Q)/\Inn(Q)$. Similarly, $\Aut_c(P_0)$ is the subgroup of $\Aut(P_0)$ that preserve the classes $[p_0(\gamma_1)],\dots,[p_0(\gamma_{n-2})]$. We take a normalized section
\[ \overline{s}\colon \Out_c(Q) \lra \Aut_c(Q) \]
by mandating that $s(\tau)(q(\lambda)) = q(\lambda)$ and then apply the homomorphism
\[ \rho_{q(N_\lambda)}^\star\colon \Aut_c(Q) \lra \Aut_c(P_0) \]
induced by the homomorphism $\rho_{q(N_\lambda)}$. The ambiguity in the selection of the section $\overline{s}$ is up to multiplication by the subgroup $\overline{\mu}(C_Q(q(\lambda)))$ of $\Inn(Q)$, the image of the centralizer of $q(\lambda)$ in $Q$ under $\overline{\mu}$. Provided $C_Q(q(\lambda))$ maps to the trivial subgroup under $\rho_{q(N_\lambda)}$, the resulting map
\[ \overline{\delta}\colon \Out_c(Q) \lra \Aut_c(P_0) \]
given by $\overline{\delta} = \rho_{q(N_\lambda)}^\star \circ \overline{s}$ is a homomorphism.

\begin{lemma}\label{RedEquLemma}
If $C_Q(q(\lambda)) < \ker \rho_{q(N_\lambda)}$, then $\overline{\delta} \circ \rho_{\ker q}^\ast \circ \Push = \overline{\mu} \circ p_0$.
\end{lemma}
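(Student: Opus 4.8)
The plan is to bootstrap Lemma~\ref{RedEquLemma} from Lemma~\ref{SequenceConnectionLemma}, which already gives the corresponding identity $\mu = \delta \circ \Push$ at the level of the full surface groups. The basic idea is that everything in sight is induced, via the quotient maps $q$ and $p_0$, from the corresponding picture downstairs, so the identity we want should follow by ``pushing forward'' the known identity $\mu = \delta \circ \Push$ through these quotients --- provided the constructions are compatible, and the only place compatibility can fail is in the choice of the section $\overline{s}$, which is exactly where the hypothesis $C_Q(q(\lambda)) < \ker \rho_{q(N_\lambda)}$ is used.

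First I would record the compatibility of the sections. Given $\tau \in \Out_c(Q)$ pulled back to $\Out_c(\pi_1(S_{0,n}))$ via $\rho_{\ker q}^\ast$, I want to choose the normalized section $s$ (sending $\lambda \mapsto \lambda$) upstairs and the normalized section $\overline{s}$ (sending $q(\lambda) \mapsto q(\lambda)$) downstairs so that $q \circ s(\tau) = \overline{s}(\rho_{\ker q}^\ast(\tau)) \circ q$ as maps $\pi_1(S_{0,n}) \to Q$. Since $\rho_{\ker q}^\ast(\tau)$ is represented by the automorphism of $Q$ induced by $s(\tau)$ (as $\ker q$ is $\Aut_c$--invariant), this induced automorphism already fixes $q(\lambda)$, so it is a legitimate choice for $\overline{s}(\rho_{\ker q}^\ast(\tau))$; the intertwining relation is then immediate. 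Next I would observe that $\rho_{N_\lambda}$ and $\rho_{q(N_\lambda)}$ fit into the commuting square with $q$ and $p_0$ (this square is the defining diagram of $\ker p_0$ in Step~1), and hence $p_0 \circ \rho_{N_\lambda}^\star = \rho_{q(N_\lambda)}^\star \circ$ (the $q$--induced map) on $\Aut_c$. Combining these two compatibilities yields $\overline{\delta} \circ \rho_{\ker q}^\ast = (\text{$p_0$--induced map}) \circ \delta$ on $\Out_c(\pi_1(S_{0,n}))$. Then, applying Lemma~\ref{SequenceConnectionLemma}, for any $x \in \pi_1(S_{0,n-1})$ we get
\[ \overline{\delta}(\rho_{\ker q}^\ast(\Push(x))) = (\text{$p_0$--induced map})(\delta(\Push(x))) = (\text{$p_0$--induced map})(\mu(x)), \]
and the $p_0$--image of the inner automorphism $\mu(x)$ of $\pi_1(S_{0,n-1})$ is, by definition of $\overline{\mu}$, precisely $\overline{\mu}(p_0(x))$. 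This is the desired equality $\overline{\delta} \circ \rho_{\ker q}^\ast \circ \Push = \overline{\mu} \circ p_0$.

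The main obstacle --- and the reason for the hypothesis --- is well-definedness of $\overline{\delta}$ as a genuine homomorphism, which the excerpt has already flagged: the section $\overline{s}$ is ambiguous up to right multiplication by $\overline{\mu}(C_Q(q(\lambda)))$, and for the composite $\rho_{q(N_\lambda)}^\star \circ \overline{s}$ to be independent of this choice one needs $\rho_{q(N_\lambda)}^\star$ to kill $\overline{\mu}(C_Q(q(\lambda)))$, i.e.\ one needs $C_Q(q(\lambda)) < \ker \rho_{q(N_\lambda)}$. So the care required is to check that in the chain of equalities above every term is independent of the section choices made, which reduces exactly to this hypothesis; once $\overline{\delta}$ is a homomorphism the identity only needs to be verified on the image of $\Push$ (or even just on a generating set of $\pi_1(S_{0,n-1})$, as in the remark after Lemma~\ref{SequenceConnectionLemma}), and no further subtlety arises. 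I would also double-check the one bookkeeping point that $\rho_{\ker q}^\ast(\Push(x))$ indeed lies in $\Out_c(Q)$ and that $q \circ \Push(x)$ lands where expected, but this is formal from diagram~(\ref{3}) with $g = 0$.
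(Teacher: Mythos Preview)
Your proposal is correct and follows essentially the same route as the paper: both reduce the identity to Lemma~\ref{SequenceConnectionLemma} via the commuting squares relating $(q,p_0)$ to $(\rho_{N_\lambda},\rho_{q(N_\lambda)})$ and $(\rho_{\ker q}^\star,\rho_{\ker p_0}^\star)$, and both isolate the single non-formal step as the comparison of $\rho_{\ker q}^\star\circ s$ with $\overline{s}\circ\rho_{\ker q}^\ast$, which is handled exactly by the hypothesis $C_Q(q(\lambda))<\ker\rho_{q(N_\lambda)}$. The only difference is packaging: the paper measures the discrepancy $(\rho_{\ker q}^\star(s(\tau)))^{-1}\overline{s}(\rho_{\ker q}^\ast(\tau))$ and shows it lies in $\overline{\mu}(C_Q(q(\lambda)))\subset\ker\rho_{q(N_\lambda)}^\star$, whereas you phrase the same point as ``$\rho_{\ker q}^\star(s(\tau))$ is a legitimate choice of $\overline{s}(\rho_{\ker q}^\ast(\tau))$, and under the hypothesis $\overline{\delta}$ is independent of this choice.''
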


\begin{pf}
The essence of this lemma is that in the diagram
\begin{equation}\label{BIGD}
\xymatrix{ & & & \Out_c(Q) \ar[d]^{\overline{s}} \ar[ddr]^{\overline{\delta}}& \\ & \Out_c(\pi_1(S_{0,n})) \ar[r]_{s} \ar[rd]_\delta \ar[rru]^{\rho_{\ker q}^\ast} & \Aut_c(\pi_1(S_{0,n})) \ar[r]^{\rho_{\ker q}^\star} \ar[d]^{\rho_{N_\lambda}^\star} & \Aut_c(Q) \ar[rd]_{\rho_{q(N_\lambda)}^\star} & \\ \pi_1(S_{0,n-1}) \ar[ru]^{\Push} \ar[rr]^{\mu} \ar[rd]_{p_0} & & \Aut_c(\pi_1(S_{0,n-1})) \ar[rr]^{\rho_{\ker p_0}^\star} & & \Aut_c(P_0) \\ & P_0 \ar[rrru]_{\overline{\mu}} & & & }
\end{equation}
we can push the bottom map $\overline{\mu} \circ p_0$ through to the top map $\overline{\delta} \circ \rho_{\ker q}^\ast \circ \Push$. The chief difficulty in proving this assertion is the non-commutativity of the top most right triangle ((\ref{CD1}) below) in (\ref{BIGD}). To begin, the diagrams
\begin{equation}\label{CD3}
\xymatrix{ \pi_1(S_{0,n-1}) \ar[d]_{p_0} \ar[rr]^\mu & & \Aut_c(\pi_1(S_{0,n-1})) \ar[d]^{\rho_{\ker p_0}} \\ P_0 \ar[rr]_{\overline{\mu}} & & \Aut_c(P_0)}
\end{equation}
and
\begin{equation}\label{CD2}
\xymatrix{\Aut_c(\pi_1(S_{0,n})) \ar[d]_{\rho_{\ker q}^\star} \ar[rr]^{\rho_{N_\lambda}^\star} & & \Aut_c(\pi_1(S_{0,n-1})) \ar[d]^{\rho_{\ker p_0}^\star} \\ \Aut_c(Q) \ar[rr]_{\rho_{q(N_\lambda)}^\star} & & \Aut_c(P_0)} \end{equation}
commute. The commutativity of (\ref{CD3}) and (\ref{CD2}) in tandem with Lemma \ref{SequenceConnectionLemma} yield the following string of functional equalities:
\begin{align*}
\overline{\mu} \circ p_0 &= \rho_{\ker p_0}^\star \circ \mu & (\text{by }(\ref{CD3})) \\
&= \rho_{\ker p_0}^\star \circ \delta \circ \Push & (\text{by Lemma }\ref{SequenceConnectionLemma}) \\
&= \rho_{\ker p_0}^\star \circ \rho_{N_\lambda}^\star \circ s \circ \Push & (\text{by definition of }\delta) \\
&= \rho_{q(N_\lambda)}^\star \circ \rho_{\ker q}^\star \circ s \circ \Push & (\text{by }(\ref{CD2})).
\end{align*}
We claim that
\begin{equation}\label{FunctionEquation}
\rho_{q(N_\lambda)}^\star \circ \overline{s} \circ \rho_{\ker q}^\ast = \rho_{q(N_\lambda)}^\star \circ \rho_{\ker q}^\star \circ s
\end{equation}
holds. However, since the diagram
\begin{equation}\label{CD1}
\xymatrix{ \Aut_c(\pi_1(S_{0,n})) \ar[rr]^{\rho_{\ker q}^\star} & &\Aut_c(Q) \\ \Out_c(\pi_1(S_{0,n})) \ar[u]^s \ar[rr]_{\rho_{\ker q}^\ast} & & \Out_c(Q) \ar[u]_{\overline{s}} }
\end{equation}
need not commute, to show (\ref{FunctionEquation}), we must understand the failure of (\ref{CD1}) to commute. Note that the validity of (\ref{FunctionEquation}) amounts to showing the failure of the commutativity of (\ref{CD1}), namely $(\rho_{\ker q}^\star(s(\tau)))^{-1}\overline{s}(\rho_{\ker q}^\ast(\tau))$, resides in the kernel of $\rho_{q(N_\lambda)}^\star$. To that end, set
\[ \theta\colon \Aut_c(\pi_1(S_{0,n})) \lra \Out_c(\pi_1(S_{0,n})) \]
and
\[ \overline{\theta}\colon \Aut_c(Q) \lra \Out_c(Q) \]
to be the homomorphisms induced by reduction modulo the subgroups $\Inn(\pi_1(S_{0,n}))$ and $\Inn(Q)$, respectively. As $s$ and $\overline{s}$ are normalized sections of $\theta$ and $\overline{\theta}$, we have
\begin{equation}\label{SectionEquation}
\theta \circ s = \textrm{Id}, \quad \overline{\theta} \circ \overline{s} = \textrm{Id}.
\end{equation}
The commutativity of the diagram
\[ \xymatrix{ \Aut_c(\pi_1(S_{0,n})) \ar[rr]^{\rho_{\ker q}^\star} \ar[d]_{\theta} & &\Aut_c(Q) \ar[d]^{\overline{\theta}} \\ \Out_c(\pi_1(S_{0,n})) \ar[rr]_{\rho_{\ker q}^\ast} & & \Out_c(Q)}  \]
with (\ref{SectionEquation}) yields
\begin{equation}\label{ThetaProjection}
\overline{\theta} \circ \rho_{\ker q}^\star \circ s = \rho_{\ker q}^\ast, \quad \overline{\theta} \circ \overline{s} \circ \rho_{\ker q}^\ast = \rho_{\ker q}^\ast.
\end{equation}
Since $s(\tau)(\lambda) = \lambda$ and $\overline{s}(\overline{\tau})(q(\lambda)) = q(\lambda)$, we also have
\[ \rho_{\ker q}^\star(s(\tau))(q(\lambda)) = \overline{s}(\rho_{\ker q}^\ast(\tau))(q(\lambda)) = q(\lambda). \]
This equality in combination with (\ref{ThetaProjection}) imply that $\rho_{\ker q}^\star(s(\tau))$ and $\overline{s}(\rho_{\ker q}^\ast(\tau))$ differ by multiplication by an element of $\overline{\mu}(C_Q(q(\lambda)))$. Equivalently, the element
\[ (\rho_{\ker q}^\star(s(\tau)))^{-1}\overline{s}(\rho_{\ker q}^\ast(\tau)), \]
which measures the failure of the commutativity of (\ref{CD1}), resides in the subgroup $\overline{\mu}(C_Q(q(\lambda)))$. However, by assumption, $C_Q(q(\lambda)) < \ker \rho_{q(N_\lambda)}$ and so we have the equality claimed in (\ref{FunctionEquation}). Continuing our string of functional equalities started prior to (\ref{FunctionEquation}), the following string of functional equalities completes the proof:
\begin{align*}
\overline{\mu} \circ p_0 &= \rho_{q(N_\lambda)}^\star \circ \rho_{\ker q}^\star \circ s \circ \Push & (\text{by the computation above}) \\
&= \rho_{q(N_\lambda)}^\star \circ \overline{s} \circ \rho_{\ker q}^\ast \circ \Push & (\text{by }(\ref{FunctionEquation})) \\
&= \overline{\delta} \circ \rho_{\ker q}^\ast \circ \Push. & (\text{by definition of }\overline{\delta}).
\end{align*}
\end{pf}

\nid We say that a homomorphism $p_0\colon \pi_1(S_{0,n-1}) \to P_0$ is \emph{induced by $\pi_1(S_{0,n})$} if $p_0$ arises as above from a geometrically characteristic subgroup $\ker q$ of $\pi_1(S_{0,n})$ and $\overline{\delta}$ is a homomorphism. Under these assumptions, by Lemma \ref{RedEquLemma},
\[ \overline{\delta} \circ \rho_{\ker q}^\ast \circ \Push = \overline{\mu} \circ p_0. \]
Consequently,
\[ \ker(\rho_{\ker q}^\ast \circ \Push) < \ker(\overline{\mu} \circ p_0). \]
In particular, $\ker(\overline{\mu} \circ p_0)$ is a congruence subgroup and so the following lemma suffices for proving Proposition \ref{MainProp}.

\begin{lemma}\label{MainGroupLemma}
Let $\ker p$ be a finite index normal subgroup of $\pi_1(S_{0,n-1})$. Then there exists a homomorphism $p_0$ induced by $\pi_1(S_{0,n})$ such that $\ker(\overline{\mu} \circ p_0) < \ker p$.
\end{lemma}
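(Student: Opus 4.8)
\textbf{Proof proposal for Lemma \ref{MainGroupLemma}.}

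The plan is to build the homomorphism $q\colon \pi_1(S_{0,n}) \to Q$ so that, after quotienting out $q(N_\lambda)$, the induced $p_0\colon \pi_1(S_{0,n-1}) \to P_0$ refines the given $p$, while simultaneously arranging the condition $C_Q(q(\lambda)) < \ker \rho_{q(N_\lambda)}$ that makes $\overline{\delta}$ a homomorphism (so that $p_0$ is genuinely ``induced by $\pi_1(S_{0,n})$''). The natural first move is to start from the pullback $\rho_{N_\lambda}^{-1}(\ker p)$, a finite index geometrically characteristic subgroup of $\pi_1(S_{0,n})$ (as in Lemma \ref{L2}), and take $q$ to be the reduction modulo its core, i.e.\ modulo the intersection of all its images under $\PMod(S_{0,n})$. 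With this choice, $q(N_\lambda)$ surjects onto $\pi_1(S_{0,n-1})/\ker p = P$ and the induced $P_0$ will be a quotient of $\pi_1(S_{0,n-1})$ through which $p$ factors; since $\ker(\overline{\mu}\circ p_0)$ then sits inside $\ker p_0 < \ker p$ modulo the center $Z(P_0)$, one only needs $Z(P_0)$ to also be killed, which can be forced by passing to a further characteristic quotient. So the main body of the argument will be arranging the centralizer condition.

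The key steps, in order, are as follows. (1) Translate $C_Q(q(\lambda)) < \ker\rho_{q(N_\lambda)}$ into a statement about $\pi_1(S_{0,n})$: it suffices that $C_{\pi_1(S_{0,n})}(\lambda)$ maps into $\ker\rho_{q(N_\lambda)} = N_\lambda \cdot \Gamma$ where $\Gamma = \ker q$. Now $C_{\pi_1(S_{0,n})}(\lambda)$ is just $\innp{\lambda}$ itself (the centralizer of a simple puncture loop in a free group is the cyclic group it generates), and $\lambda \in N_\lambda$, so this centralizer condition is automatic before taking any further quotient — the subtlety is only that the condition must survive in $Q$, i.e.\ $C_Q(q(\lambda))$ could a priori be larger than the image of $C_{\pi_1(S_{0,n})}(\lambda)$. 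This is the point to be careful about: one must choose $Q$ so that no ``new'' centralizing elements appear, or, failing that, so that whatever new centralizing elements appear already lie in $q(N_\lambda)$. (2) Handle this by choosing $\Gamma = \ker q$ small enough that $q$ is injective on a ball of sufficient radius, or more cleanly: replace $\ker p$ first by a characteristic subgroup $K$ of $\pi_1(S_{0,n-1})$ contained in it (the core under $\PMod(S_{0,n-1})$), so that $P = \pi_1(S_{0,n-1})/K$ is canonical; then take $\Gamma$ to be the $\PMod(S_{0,n})$-core of $\rho_{N_\lambda}^{-1}(K)$; the diagram defining $p_0$ then produces $P_0$ as a $\PMod(S_{0,n-1})$-invariant quotient of $\pi_1(S_{0,n-1})$ with $K > \ker p_0$, hence $\ker p_0 < \ker p$, and one checks $\overline{\mu}$ is defined on $P_0$. (3) Verify the centralizer hypothesis for this $Q$ using that $\rho_{q(N_\lambda)}$ has kernel containing $\innp{q(\lambda)}$ and that $q(\lambda)$ generates its own centralizer in $Q$ because $Q$ is a quotient by a geometrically characteristic (hence ``structured'') subgroup — here one invokes the centerless-type facts the introduction alludes to (Lemma \ref{CenterlessLemma}, forthcoming in the paper). (4) Conclude via Lemma \ref{RedEquLemma}: $\ker(\overline{\mu}\circ p_0) = \rho_{\ker p_0}^{-1}(Z(P_0))$ contains... no — rather, $\ker(\overline\mu \circ p_0)$ is the preimage of the center, so to get containment in $\ker p$ replace $P_0$ by $P_0 / Z(P_0)$-type quotient at the outset, i.e.\ choose $K$ from the start to already have trivial-center quotient; since any finite index subgroup contains a finite index characteristic subgroup with centerless quotient is false in general, instead observe that we are free to \emph{enlarge} the target: replace $p$ by $\overline{\mu}\circ p' $ itself cannot be done circularly, so the correct move is: it suffices to find $p_0$ with $\ker p_0 < \ker p$ and $Z(P_0) = 1$; then $\ker(\overline\mu \circ p_0) = \ker p_0 < \ker p$.

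The main obstacle is step (3) together with securing $Z(P_0) = 1$: we need the characteristic quotient $P_0$ of $\pi_1(S_{0,n-1})$ to be \emph{centerless}, and we need $q(\lambda)$ to be self-centralizing in $Q$ up to $q(N_\lambda)$. Both are instances of the ``exceptional symmetries'' the introduction warns about, and both are exactly what Lemma \ref{CenterlessLemma} (referenced but not yet stated in this excerpt) is designed to supply: for a suitable finite geometrically characteristic quotient, the relevant centralizers collapse. So the real content of the proof of Lemma \ref{MainGroupLemma} is (a) reducing to the two centerless/self-centralizing assertions and (b) citing the centerless lemma to discharge them; the diagram-chasing with the pullbacks and cores is routine once the right $\Gamma$ is written down. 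I would therefore open the proof by fixing $K \trianglelefteq \pi_1(S_{0,n-1})$ characteristic, finite index, contained in $\ker p$, with $\pi_1(S_{0,n-1})/K$ centerless (justified by the centerless lemma applied to a surface group), then define $\Gamma$ as the $\PMod(S_{0,n})$-core of $\rho_{N_\lambda}^{-1}(K)$, set $Q = \pi_1(S_{0,n})/\Gamma$, check the centralizer hypothesis $C_Q(q(\lambda)) < \ker\rho_{q(N_\lambda)}$ via the self-centralizing property of puncture loops, read off $P_0$ and confirm $\ker p_0 < \ker p$ and $Z(P_0) = 1$, and finish by quoting Lemma \ref{RedEquLemma} to get $\ker(\overline{\mu}\circ p_0) = \ker p_0 < \ker p$.
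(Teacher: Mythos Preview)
There is a genuine gap, and it sits exactly where you flag ``the main obstacle.'' Your candidate $\Gamma$ is (the $\PMod$--core of) $\rho_{N_\lambda}^{-1}(K)$; but $N_\lambda = \ker\rho_{N_\lambda}$ is contained in $\rho_{N_\lambda}^{-1}(K)$ and is already $\PMod(S_{0,n})$--invariant, so $N_\lambda \subset \Gamma$. Hence $q(\lambda)=1$, $q(N_\lambda)=1$, and the centralizer hypothesis $C_Q(q(\lambda)) < \ker\rho_{q(N_\lambda)}$ becomes $Q < \{1\}$, which is false. In other words, your construction kills $\lambda$ outright, so the map $\overline{\delta}$ is not defined and $p_0$ is not ``induced by $\pi_1(S_{0,n})$'' in the sense the lemma requires.

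Even if you shrink $\Gamma$ so that $q(\lambda)\ne 1$, your step (3) does not go through. You assert that ``$q(\lambda)$ generates its own centralizer in $Q$ because $Q$ is a quotient by a geometrically characteristic subgroup,'' citing Lemma~\ref{CenterlessLemma}; but that lemma only produces centerless quotients of $\pi_1(S_{0,n-1})$ and says nothing about centralizers of $q(\lambda)$ in finite quotients of $\pi_1(S_{0,n})$. The self-centralizing property of $\lambda$ in the free group $\pi_1(S_{0,n})$ does not descend to finite quotients --- $C_Q(q(\lambda))$ is typically much larger than the image of $C_{\pi_1(S_{0,n})}(\lambda)$, and this is precisely the difficulty. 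The paper's proof supplies the missing idea: it builds $\varphi\colon \pi_1(S_{0,n}) \to U_\ell \rtimes P$, where $U_\ell$ is the $\B{F}_\ell$--group algebra of $P$, sending $\lambda \mapsto (1,1)$ and $\gamma_j \mapsto (0,p(\gamma_j))$. In this target the centralizer of $(1,1)$ is visibly contained in $U_\ell$, which is the normal closure of $(1,1)$; so the centralizer condition holds by construction. One then makes the kernel geometrically characteristic by summing over the $\Aut_c$--orbit of $\varphi$, and the centerless hypothesis on $P$ (supplied by Lemma~\ref{CenterlessLemma}) is used only at the very end to pass from $\ker(\overline{\mu}\circ p_0)$ to $\ker p$ via the projection onto the $\varphi$--factor. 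The group-algebra step is the essential content you are missing; taking cores of pullbacks cannot manufacture it.
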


\nid \textbf{Step 2.} The proof of Lemma \ref{MainGroupLemma} will also be split into two parts. This division is natural in the sense that we need to produce a homomorphism induced by $\pi_1(S_{0,n})$ and also control the center of the target of the induced homomorphism. We do the latter first via our next lemma as this lemma is only needed at the very end of the proof of Lemma \ref{MainGroupLemma}. In addition, some of the ideas used in the proof will be employed in the proof of Lemma \ref{MainGroupLemma} (see \cite[Proposition 2.7]{LS} for a more general result).

\begin{lemma}\label{CenterlessLemma}
Let $\ker p$ be a finite index normal subgroup of $\pi_1(S_{0,n-1})$ with $n>3$. Then there exists a finite index normal subgroup $\ker p_\ell$ of $\pi_1(S_{0,n-1})$ such that the resulting quotient $P_\ell$ is centerless and $\ker p_\ell < \ker p$.
\end{lemma}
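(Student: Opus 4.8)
The plan is to find a single finite-index normal subgroup of $\pi_1(S_{0,n-1})$ contained in $\ker p$ whose quotient has trivial center, exploiting the fact that the free group $\pi_1(S_{0,n-1})$ (of rank $n-2 \geq 2$ when $n > 3$) has a very rich supply of finite quotients. First I would replace $\ker p$ by a characteristic subgroup of $\pi_1(S_{0,n-1})$ contained in it, say $\ker p' = \bigcap \{ \ker \phi : \phi\co \pi_1(S_{0,n-1}) \to P \}$ where $P = \pi_1(S_{0,n-1})/\ker p$; this is finite-index (the intersection is finite), normal, contained in $\ker p$, and moreover $\PMod(S_{0,n-1})$--invariant if we intersect over all $\phi$ to a fixed finite group, which will be needed so that the resulting $\ker p_\ell$ is genuinely usable later. (If geometric characteristicity is not actually required at this stage—only normality and finite index appear in the statement—then one can skip this refinement, but it costs nothing.)

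Next, the key construction: I want to enlarge the quotient so as to kill the center without creating a new one. Let $Q_0 = \pi_1(S_{0,n-1})/\ker p'$. Choose a nonabelian finite simple group $A$ — for instance $A = A_5$ — and, since $\pi_1(S_{0,n-1})$ is free of rank $\geq 2$, it surjects onto $A$; more strongly, it surjects onto $A \times A \times \cdots \times A$ ($k$ factors) and onto $A \wr (\mathbf{Z}/m)$, etc. The cleanest choice is to take $p_\ell$ to be the product homomorphism $\pi_1(S_{0,n-1}) \to Q_0 \times A^{Q_0}$ realized appropriately, or more simply to note the following standard fact: a free group of rank $\geq 2$ admits a finite quotient $P_\ell$ which is centerless and which maps onto $Q_0$. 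Concretely, pick a surjection $f\co \pi_1(S_{0,n-1}) \to A$ with $A$ nonabelian simple; the diagonal-type map $(p', f^{\times N})$ for suitably many independent surjections to $A$ has image with trivial center provided the $A$-factors are "independent" of $Q_0$ and of each other, by an application of Goursat's lemma: a subdirect product of $Q_0$ with copies of $A$ either projects onto a full $A$-factor (killing central elements supported there) or identifies that factor with a quotient of $Q_0$; choosing the $f_i$ so that no $f_i$ factors through $Q_0$ forces the image to contain $A^N$, whose centralizer in the product is central in $Q_0 \times \{1\}$, and one more independent copy of $A$ mapping into that residual $Q_0$-part eliminates what remains. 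Then set $\ker p_\ell = \ker p' \cap \bigcap_i \ker f_i \subset \ker p' \subset \ker p$.

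The main obstacle is the bookkeeping in the Goursat argument: verifying that finitely many "sufficiently generic" surjections $\pi_1(S_{0,n-1}) \to A$ can be chosen so that the image of the combined map is exactly a subdirect product in which every almost-simple factor is "new," and then checking that such a subdirect product is centerless. This is where freeness of $\pi_1(S_{0,n-1})$ of rank $n - 2 \geq 2$ is essential — it guarantees enough room to choose the $f_i$ with pairwise non-conjugate, non-proportional kernels — and it is presumably why the hypothesis $n > 3$ appears. I would isolate the purely group-theoretic statement "any finite group is a quotient of a finite centerless group via a map that can be taken to factor through a prescribed quotient" (equivalently: every finite quotient of a rank-$\geq 2$ free group is dominated by a centerless finite quotient), prove it once using the wreath product $A \wr Q_0$ with $A$ nonabelian simple (whose center is trivial, and which surjects onto $Q_0$), observe $\pi_1(S_{0,n-1})$ surjects onto $A \wr Q_0$ since the latter is generated by $\leq n-2$ elements once $n$ is large enough — and here a little care with generator counts, or iterating with $A^{|Q_0|} \rtimes Q_0$ built from a single surjection to $A$ composed with the regular-type action, handles the rank constraint — and then pull back. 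This reduces the whole lemma to the clean fact that $Z(A \wr Q_0) = 1$ for $A$ nonabelian simple and $Q_0 \neq 1$ acting faithfully (or the obvious modification when $Q_0 = 1$, where one just takes $P_\ell = A$).
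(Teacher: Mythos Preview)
Your approach is genuinely different from the paper's, and it contains one real gap and one unresolved obligation.

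\textbf{The direct-product/Goursat idea does not work.} If you arrange the $f_i$ so that the image of $(p',f_1,\dots,f_N)$ contains $\{1\}\times A^N$, then the image is exactly $Q_0\times A^N$, and $Z(Q_0\times A^N)=Z(Q_0)\times\{1\}$. No number of additional \emph{direct} factors of simple groups can touch $Z(Q_0)$: centers of direct products are products of centers, and any surjection $Q_0\to A$ sends $Z(Q_0)$ into $Z(A)=1$, so the ``one more independent copy of $A$ mapping into that residual $Q_0$--part'' cannot see $Z(Q_0)$ at all. Killing $Z(Q_0)$ genuinely requires a nontrivial action, which is why you then turn to the wreath product.

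\textbf{The wreath-product idea is sound but incomplete.} It is true that $Z(A\wr Q_0)=1$ for $A$ nonabelian simple, and that $A\wr Q_0$ surjects onto $Q_0$. What is missing is exactly the point you flag: a proof that the given surjection $p'\colon F_{n-2}\to Q_0$ lifts to a \emph{surjection} $F_{n-2}\to A\wr Q_0$. Via Gasch\"utz this reduces to $d(A\wr Q_0)\le n-2$, which is true but is a nontrivial fact you have not supplied; and $n$ is fixed, so ``once $n$ is large enough'' is not available.

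\textbf{What the paper does instead.} The paper avoids both the Goursat bookkeeping and any generator-counting theorem by working with the regular representation over $\B{F}_\ell$ and writing down explicit images of the free generators. Starting from a non-cyclic quotient $Q$, it forms $R_\ell=\innp{(1,q_1),(0,q_2),\dots,(0,q_{n-2})}\subset \B{F}_\ell[Q]\rtimes Q$, checks by a short calculation that any central element of $R_\ell$ has $Q$--component in $\innp{q_1}$, iterates once to $S_\ell\subset \B{F}_\ell[R_\ell]\rtimes R_\ell$, and shows (using non-cyclicity of $Q$) that $Z(S_\ell)$ lies in the cyclic group generated by the vector $\sum_{r\in R_\ell} r$; quotienting by that cyclic group yields the centerless $P_\ell$. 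Because the homomorphism is defined on the nose by specifying where each $\gamma_j$ goes, surjectivity onto the named subgroup is automatic and no appeal to $d(A\wr Q_0)$ or Gasch\"utz is needed. Your wreath-product route, once the generator bound is actually proved, would give a one-step construction rather than the paper's two-step-plus-quotient; the paper's route trades that brevity for completely explicit, elementary computations.
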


\begin{pf}
We first pass to a normal subgroup $\ker q$ of $\ker p$ so that $\pi_1(S_{0,n-1})/\ker q$ is not cyclic. Note that if $P$ is cyclic, then $\ker p$ contains the kernel of the homology map $\pi_1(S_{0,n-1}) \to H_1(S_{0,n-1},\B{Z}/m\B{Z})$ for some $m$. We simply take this kernel for $\ker q$. Note that since $n>3$, the group $H_1(S_{0,n-1},\B{Z}/m\B{Z})$ is not cyclic. Let $Q$ denote the finite group $\pi_1(S_{0,n-1})/\ker q$. For a fixed prime $\ell$, let $V_\ell$ denote the $\B{F}_\ell$--group algebra of $Q$ where $\B{F}_\ell$ is the finite field of prime order $\ell$. Recall
\[ V_\ell = \set{\sum_{q' \in Q} \alpha_{q'} q', \quad \alpha_{q'} \in \B{F}_\ell} \]
is an $\B{F}_\ell$--vector space with basis $Q$ and algebra structure given by polynomial multiplication. The group $Q$ acts by left multiplication on $V_\ell$ and this action yields the split extension $V_\ell \rtimes Q$. Let $q(\gamma_j) = q_j$ and set $R_\ell$ to be the subgroup $V_\ell \rtimes Q$ generated by
\[ \set{(1,q_1),(0,q_2),\dots,(0,q_t)} = \set{r_1,r_2,\dots,r_{n-2}}. \]
We have a surjective homomorphism $r\colon \pi_1(S_{0,n-1}) \to R_\ell$ given by $r(\gamma_j) = r_j$. If $q_1$ has order $k_1$, note that
\[ r_1^{k_1} = (1,q_1)^{k_1} = (1+q_1+\dots+q_1^{k_1-1},1). \]
Now assume that $r' \in Z(R_\ell)$ is central and of the form $(v,q')$. It follows that $q' \in Z(Q)$ and
\[ v+q'(1+q_1+\dots+q_1^{k_1-1}) = v + (1+q_1+\dots+q_1^{k_1-1}). \]
Canceling $v$ from both side, we see that
\[ q'+q'q_1+\dots+q'q_1^{k_1-1} = 1 + q_1 + \dots + q_1^{k_1-1}. \]
In particular, there must be some power $k$ such that $q'q_1^k = 1$ and so $q' \in \innp{q_1}$. Next, set $W_\ell$ to be the $\B{F}_\ell$--group algebra of $R_\ell$ and let $S_\ell$ be the subgroup of $W_\ell \rtimes R_\ell$ generated by the set
\begin{align*}
\set{(1,(1,q_1)),(0,(0,q_2)),\dots,(0,(0,q_{n-2}))} &= \set{(1,r_1),(0,r_2),\dots,(0,r_{n-2})} \\
&= \set{s_1,s_2,\dots,s_{n-2}}.
\end{align*}
We again have a surjective homomorphism $s\colon \pi_1(S_{0,n-1}) \to S_\ell$ given by $s(\gamma_j) = s_j$. As before, if $s' \in Z(S_\ell)$ is central and of the form $(w,r')$, then $r' \in Z(R_\ell)$ and $r' \in \innp{r_1}$. In particular, for some $k \leq \abs{r_1}$, we have
\[ r' = (1+q_1+\dots+q_1^{k-1},q_1^{k}). \]
Since $r' \in Z(R_\ell)$, we have
\[ r_jr' = (0,q_j)r' = r'(0,q_j) = r'r_j \]
for $j>1$. This equality yields the equation
\[ q_j(1+q_1+\dots+q_1^{k-1}) = 1+q_1+\dots+q_1^{k-1}. \]
As before, this equality implies $q_j \in \innp{q_1}$ for all $j>1$ provided $k < \abs{r_1}$. However, if this holds, $Q$ must be cyclic. As $Q$ is non-cyclic, $r'$ must be trivial and $s'$ has the form $(w,0)$. For $(w,0)$ to be central in $S_\ell$, we must have
\[ (w,0)(0,r_j) = (0,r_j)(w,0) \]
for all $j \ne 1$ and
\[ (w,0)(1,r_1) = (1,r_1)(w,0). \]
These equalities imply that $r_jw = w$ for $j=1,\dots,n-2$. Since $\set{r_j}$ generate $R_\ell$, the element $w$ must be fixed by every element $r \in R_\ell$. However, the only vectors in $W_\ell$ that are fixed by every element of $R_\ell$ are of the form (see for instance \cite[p. 37]{FH})
\[ w_\alpha = \alpha \sum_{r \in R_\ell} r, \quad \alpha \in \B{F}_\ell. \]
Let $C$ be the normal cyclic subgroup $S_\ell \cap \innp{(w_1,0)}$ of $S_\ell$, $P_\ell = S_\ell/C$, and $p_{j,\ell}$ be the image of $s_j$ under this projection. By construction, $P_\ell$ is centerless and for the homomorphism
\[ p_\ell\colon \pi_1(S_{0,n-1}) \lra P_\ell \]
given by $p_\ell(\gamma_j)=p_{j,\ell}$, we have $\ker p_\ell < \ker p$. To see the latter, we simply note that we have the commutative diagram
\[ \xymatrix{ \pi_1(S_{0,n-1}) \ar[dd]_s \ar[rdd]^{p_\ell} \ar[rrrdd]_r \ar[rrrrrdd]_q \ar[rrrrrrrdd]^p & & & & & & &\\ & & & & & & &\\ S_\ell \ar[r] & P_\ell \ar[rr] & & R_\ell \ar[rr] & & Q \ar[rr] & & P,} \]
where the bottom maps are given by
\[ s_j \longmapsto p_{j,\ell} \longmapsto r_j \longmapsto q_j \longmapsto p_j = p(\gamma_j). \]
\end{pf}

\nid We are now ready to prove Lemma \ref{MainGroupLemma}.

\begin{pf}[Proof of Lemma \ref{MainGroupLemma}]
Given a finite index normal subgroup $\ker p$ of $\pi_1(S_{0,n-1})$, we must show that there is a homomorphism $p_0$ induced by $\pi_1(S_{0,n})$ such that $\ker(\overline{\mu} \circ p_0) < \ker p$. By Lemma \ref{CenterlessLemma}, we may assume that $P = \pi_1(S_{0,n-1})/\ker p$ is centerless. The homomorphism $p$ provides us with a homomorphism
\[ p \circ \rho_{N_\lambda}\colon \pi_1(S_{0,n}) \lra P. \]
Let $U_\ell$ be the $\B{F}_\ell$--group algebra of $P$ and define
\[ \vp\colon \pi_1(S_{0,n}) \lra U_\ell \rtimes P \]
by
\[ \vp(\lambda) = (1,p \circ \rho_{N_\lambda}(\lambda)), \quad \vp(\gamma_j) = (0,p\circ \rho_{N_\lambda}(\gamma_j)). \]
Note that the normal closure of $\vp(\lambda)$ contains the centralizer of $\vp(\lambda)$. Indeed, the normal closure of $\vp(\lambda)$ is simply $U_\ell \cap \vp(\pi_1(S_{0,n}))$. If $(v,p') \in \vp(\pi_1(S_{0,n}))$ commutes with $\vp(\lambda)$, then
\[ (v,p')(1,1) = (v+p',p') = (v+1,p') = (1,1)(v,p'). \]
Thus, $p'=1$ and $(v,p') \in U_\ell$. By construction, the diagram
\begin{equation}\label{CD4}
\xymatrix{ \pi_1(S_{0,n}) \ar[d]_{\vp} \ar[r]^{\rho_{N_\lambda}} & \pi_1(S_{0,n-1}) \ar[d]^p \\ U_\ell \rtimes P \ar[r]_{\rho_{U_\ell}} & P }
\end{equation}
commutes. This representation is unlikely to have a geometrically characteristic kernel. We rectify that as follows. Let $\Cal{O}_\vp$ denote the orbit of $\vp$ under the action of $\Aut_c(\pi_1(S_{0,n}))$ on $\textrm{Hom}(\pi_1(S_{0,n}),U_\ell \rtimes P)$ given by pre-composition. We define a new homomorphism
\[ q\co \pi_1(S_{0,n}) \lra Q < \bop_{\vp' \in \Cal{O}_\vp} U_\ell \rtimes P, \]
by
\[ q = \bop_{\vp' \in \Cal{O}_\vp} \vp'. \]
By construction, the kernel of this homomorphism is geometrically characteristic. In addition, each representation $\vp'$ has the property that the normal closure of $\vp'(\lambda)$ contains the centralizer of $\vp'(\lambda)$. Note that this follows from the fact that this containment holds for $\vp$ and the homomorphism $\vp'$ is equal to $\vp \circ \tau$ for some $\tau \in \Aut_c(\pi_1(S_{0,n}))$. As $\tau$ preserves the conjugacy class $[\lambda]$, $\vp'(\lambda)$ is conjugate to $\vp(\lambda)$ in $U_\ell \rtimes P$. We assert that we have the inclusion $\ker(\overline{\mu} \circ p_0) < \ker p$, where $p_0$ is induced by the diagram
\[ \xymatrix{ \pi_1(S_{0,n}) \ar[d]_q \ar[r]^{\rho_{N_\lambda}} & \pi_1(S_{0,n-1}) \ar[d]^{p_0} \\ Q \ar[r]_{\rho_{q(N_\lambda)}} & P_0. } \]
To see this containment, we first observe that
\[ Q/q(N_\lambda) < \bop_{\vp' \in \Cal{O}_\vp} \vp'(\pi_1(S_{0,n}))/\vp'(N_\lambda) = \bop_{\vp' \in \Cal{O}_\vp} P. \]
Composing with the projection map $\pi_\vp$ onto the factor $\vp(\pi_1(S_{0,n}))$ associated with $\vp$, we get the commutative diagram
\begin{equation}\label{CD5}
\xymatrix{ \pi_1(S_{0,n}) \ar[d]_{\rho_{N_\lambda}} \ar[r]^q & Q \ar[d]^{\rho_{q(N_\lambda)}} \ar[r]^{\pi_\vp} & U_\ell \rtimes P \ar[d]^{\rho_{U_\ell}} \\ \pi_1(S_{0,n-1}) \ar[r]_{p_0} & P_0 \ar[r]_{\pi_\vp} & P.}
\end{equation}
Now, if $\gamma \in \ker(\overline{\mu} \circ p_0)$, then $p_0(\gamma)$ is central in $P_0$. As central elements map to central elements under homomorphisms, $\pi_\vp(p_0(\gamma))$ must be central in $P$. However, by assumption $P$ is centerless and so $f(p_0(\gamma))=1$. Since $\pi_\vp \circ p_0 = p$ by (\ref{CD4}) and (\ref{CD5}), we see that $p(\gamma) = 1$. Therefore, $\ker(\overline{\mu} \circ p_0) < \ker p$.
\end{pf}

\nid The construction above only uses that $\pi_1(S_{0,n-1})$ is a free group in the proof of Lemma \ref{CenterlessLemma}. With more care, the same method used in the proof of Lemma \ref{CenterlessLemma} can be used to prove the following (for $g=1$, we must assume $n>1$)---this again follows from \cite[Proposition 2.7]{LS}.

\begin{lemma}\label{CenterlessLemmaSurfaceCase}
Let $\ker p$ be a finite index normal subgroup of $\pi_1(S_{g,n-1})$. Then there exists a finite index normal subgroup $\ker p_\ell$ of $\pi_1(S_{g,n-1})$ such that the resulting quotient $P_\ell$ is centerless and $\ker p_\ell < \ker p$.
\end{lemma}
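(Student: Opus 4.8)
The plan is to follow the proof of Lemma~\ref{CenterlessLemma} as closely as possible, the one genuinely new feature being the single defining relator that appears once $S_{g,n-1}$ is a closed surface. As a first step, exactly as in that proof, I would pass to a normal subgroup $\ker q\le\ker p$ for which $Q:=\pi_1(S_{g,n-1})/\ker q$ is non-cyclic, if necessary by intersecting $\ker p$ with the kernel of $\pi_1(S_{g,n-1})\to H_1(S_{g,n-1};\B{Z}/m\B{Z})$ for a suitable $m$. This is possible precisely when $H_1(S_{g,n-1})$ has rank at least $2$, i.e.\ $2g+\max(n-2,0)\ge 2$: automatic for $g\ge 2$, forcing $n>1$ when $g=1$, and recovering the hypothesis $n>3$ of Lemma~\ref{CenterlessLemma} when $g=0$. (These are exactly the surfaces whose fundamental group has trivial center.)

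When $n-1\ge 1$ the group $\pi_1(S_{g,n-1})$ is free of rank $2g+n-2\ge 2$, and nothing new is needed: fixing a free basis and letting one basis element play the role of $\gamma_1$, the constructions of $R_\ell\le V_\ell\rtimes Q$ and of $S_\ell\le W_\ell\rtimes R_\ell$, the passage to $P_\ell=S_\ell/C$ with $C$ the line of $R_\ell$--fixed vectors, and every centrality computation transcribe verbatim from the proof of Lemma~\ref{CenterlessLemma}; one merely carries the remaining, untwisted, generators along and invokes non-cyclicity of $Q$ in the same place. So the real content is the closed case $n-1=0$, $g\ge 2$: write $\Sigma=\pi_1(S_{g,0})$ and take $\ker q=\ker p$.

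For the closed case I would replace the iterated twist by a single homological construction. Choose a prime $\ell\nmid|Q|$ and set $M=\ker p/\bigl([\ker p,\ker p]\,(\ker p)^\ell\bigr)$, a finite $\B{F}_\ell[Q]$--module; concretely $M=H_1(S';\B{F}_\ell)\cong\B{F}_\ell^{\,2g'}$ for the finite cover $S'\to S_{g,0}$ with deck group $Q$ corresponding to $\ker p$, a closed surface of genus $g'=1+|Q|(g-1)>1$. The key point is that $Q$ acts \emph{faithfully} on $M$: for non-trivial $\sigma\in Q$ the quotient map $S'\to S'/\innp{\sigma}$ is a free covering, so $S'/\innp{\sigma}$ is a closed surface of genus $h=1+(g'-1)/|\sigma|<g'$ by Riemann--Hurwitz, and the transfer gives $\dim_{\B{F}_\ell}M^{\innp{\sigma}}=\dim_{\B{F}_\ell}H_1(S'/\innp{\sigma};\B{F}_\ell)=2h<2g'=\dim_{\B{F}_\ell}M$, so $\sigma$ cannot act as the identity. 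By Maschke's theorem $M=M^Q\oplus W$ with $W$ the sum of the non-trivial isotypic components; then $W^Q=0$, and $Q$ acts faithfully on $W$ (otherwise it would act trivially on $M^Q\oplus W=M$), so the split extension $Q\ltimes W$ is centerless by the same elementary computation as in the proof of Lemma~\ref{CenterlessLemma}. Finally, let $\ker p_\ell$ be the preimage of $M^Q$ under $\ker p\twoheadrightarrow M$: this is a finite-index normal subgroup of $\Sigma$ contained in $\ker p$, and $P_\ell:=\Sigma/\ker p_\ell$ is an extension of $Q$ by $M/M^Q\cong W$, which splits because $\ell\nmid|Q|$. Hence $P_\ell\cong Q\ltimes W$ is centerless, as required.

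The main obstacle is exactly this closed case, and within it the defining relator: the ``twist one generator'' recipe that works for free groups no longer yields a homomorphism, since the lifted standard generators must still satisfy the surface relation. The route above sidesteps this at the cost of three standard but not-quite-undergraduate facts --- faithfulness of the deck action on $H_1$ (Riemann--Hurwitz and transfer), Maschke's theorem, and the splitting of extensions of $Q$ by an $\B{F}_\ell$--module when $\ell\nmid|Q|$. Staying strictly inside the elementary register of Lemma~\ref{CenterlessLemma} one would instead redo the iterated group-algebra construction, now solving at each stage the linear equation over $\B{F}_\ell[Q]$ that forces the images of the standard surface generators to obey the surface relation; this is the approach hinted at by the remark preceding the statement, and the conclusion also follows from \cite[Proposition~2.7]{LS}.
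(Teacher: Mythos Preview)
Your proposal is correct. Note first that the paper does not actually prove this lemma: it only asserts that ``with more care, the same method used in the proof of Lemma~\ref{CenterlessLemma}'' goes through, and defers to \cite[Proposition~2.7]{LS}. Your treatment of the punctured case $n-1\ge 1$ is exactly what that remark intends, and your identification of the rank condition $2g+\max(n-2,0)\ge 2$ (recovering the caveat $n>1$ when $g=1$) is the right bookkeeping.

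For the closed case you take a genuinely different route from the one the paper gestures at. The paper's implied argument is to rerun the iterated group-algebra construction $V_\ell\rtimes Q$, $W_\ell\rtimes R_\ell$, now arranging the twisted lifts of a standard generating set so that the single surface relator is satisfied at each stage; this stays within the elementary register of Lemma~\ref{CenterlessLemma} but, as you note, requires solving a linear equation in the group algebra and tracking it through two layers. Your argument instead passes to $M=H_1(\ker p;\B{F}_\ell)$ for $\ell\nmid|Q|$, uses Riemann--Hurwitz plus transfer to get faithfulness of the $Q$--action, Maschke to split off $W$, and then reads off centerlessness of the resulting extension. This is cleaner and more conceptual, at the acknowledged cost of invoking facts (semisimplicity, the transfer isomorphism $H_1(S'/\innp{\sigma};\B{F}_\ell)\cong M^{\innp{\sigma}}$ for $\ell\nmid|\sigma|$, Schur--Zassenhaus) that sit just outside the paper's deliberately undergraduate toolkit. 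Two small remarks: the appeal to Schur--Zassenhaus is not actually needed, since any extension of $Q$ by $W$ with $W^Q=0$ and $Q$ acting faithfully on $W$ is already centerless regardless of splitting (a central element maps into $Z(Q)$ and acts trivially on $W$, hence lies in $W$, hence in $W^Q=0$); and the phrase ``take $\ker q=\ker p$'' in the closed case should really read ``after the first reduction, rename $\ker q$ as $\ker p$,'' since you still want $|Q|>1$ there.
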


\nid In total, this yields an elementary proof of the following---this also follows from \cite[Lemma 2.6]{LS}.

\begin{prop}\label{MainPropClosedCase}
$\Push(\pi_1(S_{g,n-1}))$ has the congruence subgroup property (when $g=1$, $n>1$).
\end{prop}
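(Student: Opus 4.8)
\textbf{Proof plan for Proposition \ref{MainPropClosedCase}.}

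The plan is to run exactly the same argument that established Proposition \ref{MainProp}, observing that it only used the $g=0$ hypothesis at two isolated points and that both can be replaced by statements valid for all $g$ (subject to the stated restriction $n>1$ when $g=1$). First I would recall the skeleton of the proof: we start with a finite index normal subgroup $\ker p$ of $\pi_1(S_{g,n-1})$ and must exhibit a homomorphism $p_0$ induced by $\pi_1(S_{g,n})$ with $\ker(\overline{\mu}\circ p_0)<\ker p$; by Lemma \ref{RedEquLemma} this forces $\ker p$ to be a congruence subgroup of $\Push(\pi_1(S_{g,n-1}))$, which is what we want. Note that Lemma \ref{RedEquLemma} and the discussion preceding Lemma \ref{MainGroupLemma} are stated for $S_{0,\ast}$ only for notational convenience; nothing in their proofs (which rest on Lemma \ref{SequenceConnectionLemma}, the Birman exact sequence, and the formal properties of the maps $\delta$, $\overline{\delta}$, $\mu$, $\overline{\mu}$) is special to genus zero, so I would first remark that they carry over verbatim to $\pi_1(S_{g,n})$.

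The two places where $g=0$ entered were: (i) the proof of Lemma \ref{CenterlessLemma}, which used that $\pi_1(S_{0,n-1})$ is free (and that $n>3$, so that $H_1(S_{0,n-1};\B{Z}/m\B{Z})$ is noncyclic); and (ii) the very last step of the proof of Lemma \ref{MainGroupLemma}, where centerlessness of $P$ was invoked to conclude $p(\gamma)=1$. For (i) I would simply cite Lemma \ref{CenterlessLemmaSurfaceCase}, which is precisely the surface-group analogue of Lemma \ref{CenterlessLemma} (the cyclic-quotient reduction there uses noncyclicity of $H_1(S_{g,n-1};\B{Z}/m\B{Z})$, which holds for all $(g,n)$ with $g\geq 1$, and also for $g=1$ provided $n>1$, explaining the stated restriction). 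For (ii), once $P=\pi_1(S_{g,n-1})/\ker p$ has been arranged to be centerless via Lemma \ref{CenterlessLemmaSurfaceCase}, the construction of $\vp\colon \pi_1(S_{g,n})\to U_\ell\rtimes P$ using the $\B{F}_\ell$-group algebra $U_\ell$ of $P$, the averaging over the $\Aut_c$-orbit to produce a $q$ with geometrically characteristic kernel, and the final diagram chase through (\ref{CD4}) and (\ref{CD5}) go through without any change, since the only facts used about $\pi_1(S_{g,n})$ are that it surjects $\pi_1(S_{g,n-1})$ with kernel $N_\lambda$ normally generated by $\lambda$, that $\Aut_c$ acts on homomorphisms preserving $[\lambda]$, and elementary properties of split extensions.

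I would then conclude: assembling these pieces exactly as in the genus-zero case yields Lemma \ref{MainGroupLemma} for $\pi_1(S_{g,n-1})$, hence the analogue of the paragraph before Lemma \ref{MainGroupLemma} gives that every finite index normal $\ker p$ is a congruence subgroup of $\Push(\pi_1(S_{g,n-1}))$, and since every finite index subgroup contains a finite index normal one, $\Push(\pi_1(S_{g,n-1}))$ has the congruence subgroup property. The main obstacle — and really the only nontrivial input beyond bookkeeping — is Lemma \ref{CenterlessLemmaSurfaceCase} itself: producing a centerless finite quotient of a surface group refining a given one requires the iterated group-algebra construction of the proof of Lemma \ref{CenterlessLemma} to be adapted to the non-free setting, checking that the relations in $\pi_1(S_{g,n-1})$ do not create unexpected central elements in the successive split extensions. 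Everything else is a transcription of the $g=0$ argument with $S_{0,\ast}$ replaced by $S_{g,\ast}$.
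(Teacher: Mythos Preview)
Your plan is correct and matches the paper's own argument: the paper simply remarks that the genus-zero construction used freeness of $\pi_1(S_{0,n-1})$ only in the proof of Lemma \ref{CenterlessLemma}, so replacing that lemma by its surface-group analogue, Lemma \ref{CenterlessLemmaSurfaceCase}, and rerunning the proof of Lemma \ref{MainGroupLemma} verbatim yields the result. One small correction: your explanation of the restriction $n>1$ when $g=1$ is off, since $H_1(S_{1,0};\B{Z}/m\B{Z})\cong(\B{Z}/m\B{Z})^2$ is already noncyclic; the actual obstruction is that $\pi_1(S_{1,0})\cong\B{Z}^2$ is abelian, so every nontrivial finite quotient has nontrivial center and Lemma \ref{CenterlessLemmaSurfaceCase} genuinely fails in that case.
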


\nid Finally, since the proof of Proposition \ref{InductionProp} does not require $g=0$, we have an elementary proof of the following, which was also proved in \cite[Theorem 2]{Asada} and \cite[Proposition 2.3]{Boggi2}.

\begin{prop}\label{InductionPropHigherGenus}
If $\PMod(S_{g,n-1})$ has the congruence subgroup property, then $\PMod(S_{g,n})$ has the congruence subgroup property.
\end{prop}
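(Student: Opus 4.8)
The plan is to run verbatim the proof of Proposition \ref{InductionProp}, replacing each $S_{0,\bullet}$ by $S_{g,\bullet}$ and replacing the single appeal to Proposition \ref{MainProp} by an appeal to Proposition \ref{MainPropClosedCase}. This is legitimate precisely because, as was noted immediately after the proof of Proposition \ref{InductionProp}, that argument uses nothing about the genus except the congruence subgroup property of $\Push(\pi_1(S_{0,n-1}))$, and Proposition \ref{MainPropClosedCase} supplies exactly that property for $\Push(\pi_1(S_{g,n-1}))$ (subject to the standing convention $n>1$ when $g=1$). So let $\Lambda$ be a finite index subgroup of $\PMod(S_{g,n})$. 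Replacing $\Lambda$ by a normal finite index subgroup $\ker q<\Lambda$ — and recalling that any subgroup containing a congruence subgroup is again a congruence subgroup — it suffices to prove that $\ker q$ is a congruence subgroup, where $q\colon \PMod(S_{g,n})\to Q$ is the resulting quotient map.

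First I would decompose $q$ along the Birman exact sequence (\ref{2}) for the pair $(S_{g,n},S_{g,n-1})$, obtaining the exact commutative diagram of the shape (\ref{3}) with every $S_{0,\bullet}$ replaced by $S_{g,\bullet}$; as there, the induced vertical maps $p\colon \pi_1(S_{g,n-1})\to P$ and $r\colon \PMod(S_{g,n-1})\to R$ are surjective (possibly trivial). By Proposition \ref{MainPropClosedCase}, $\Push(\pi_1(S_{g,n-1}))$ has the congruence subgroup property, so there is a finite index geometrically characteristic subgroup $\Gamma<\pi_1(S_{g,n})$ with $\ker(\rho_\Gamma^\ast\circ\Push)<\Push(\ker p)$, which is the analogue of the inclusion (\ref{CongruenceContainment}) in the present setting.

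Next, since $\PMod(S_{g,n-1})$ has the congruence subgroup property by hypothesis, Lemma \ref{L1} and Lemma \ref{L2} reduce the problem to exhibiting a finite index subgroup $\Lambda_0<\PMod(S_{g,n-1})$ with $\ker\rho_\Gamma^\ast\cap(\rho_{N_\lambda}^\ast)^{-1}(\Lambda_0)<\ker q$. I would take $\Lambda_0=\rho_{N_\lambda}^\ast(\ker\rho_\Gamma^\ast\cap\ker q)$ and then repeat the element chase from the proof of Proposition \ref{InductionProp} word for word: using $\Push(\pi_1(S_{g,n-1}))=\ker\rho_{N_\lambda}^\ast$ to write
\[
(\rho_{N_\lambda}^\ast)^{-1}(\Lambda_0)=\Push(\pi_1(S_{g,n-1}))\cdot(\ker\rho_\Gamma^\ast\cap\ker q),
\]
an element of $\ker\rho_\Gamma^\ast\cap(\rho_{N_\lambda}^\ast)^{-1}(\Lambda_0)$ has the form $sk$ with $s\in\Push(\pi_1(S_{g,n-1}))$ and $k\in\ker\rho_\Gamma^\ast\cap\ker q$; since $sk,k\in\ker\rho_\Gamma^\ast$ one gets $s\in\ker\rho_\Gamma^\ast\cap\Push(\pi_1(S_{g,n-1}))=\ker(\rho_\Gamma^\ast\circ\Push)<\Push(\ker p)$, and therefore $sk\in\Push(\ker p)\cdot(\ker\rho_\Gamma^\ast\cap\ker q)<\ker q$, as needed.

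I do not anticipate a genuine obstacle, since the substantive work has already been isolated in Proposition \ref{MainPropClosedCase} and in the proof of Proposition \ref{InductionProp}; what remains is bookkeeping. The only points needing attention are: confirming that the Birman exact sequence (\ref{2}) is valid for the pair $(S_{g,n},S_{g,n-1})$ in question — this is why $n>1$ is imposed when $g=1$, and one should dispose separately of any degenerate low-complexity case in which $\PMod(S_{g,n-1})$ is finite or $\Push$ is onto, where the statement is either vacuous or is Proposition \ref{MainPropClosedCase} itself; and checking that the notions ``geometrically characteristic subgroup'', ``$\Out_c$'', ``$\delta$'', and ``induced by $\pi_1(S_{g,n})$'' of Section \ref{Sec2} carry over verbatim to arbitrary genus, which they do since none of that material used $g=0$.
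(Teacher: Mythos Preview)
Your proposal is correct and is exactly the paper's own argument: the paper explicitly notes after the proof of Proposition \ref{InductionProp} that nothing there uses $g=0$ except the appeal to Proposition \ref{MainProp}, and then states Proposition \ref{InductionPropHigherGenus} as an immediate consequence once Proposition \ref{MainPropClosedCase} is available. Your additional care about the $g=1$, $n>1$ constraint and the degenerate low-complexity cases is appropriate bookkeeping that the paper leaves implicit.
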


\section{Comparison of the proofs}

\nid We conclude this article with a more detailed comparison of the proofs of Theorem \ref{Main}. Instead of using the homomorphisms $\overline{\delta}$ employed above, Asada extends the homomorphism $\delta$ to
\[ \widehat{\delta}\colon \Out_c(\widehat{\pi_1(S_{g,n})}) \lra \Aut_c(\widehat{\pi_1(S_{g,n-1})}). \]
The group $\Aut_c(\widehat{\pi_1(S_{g,n})})$, for any $n$, is the group of continuous automorphisms of the profinite completion $\widehat{\pi_1(S_{g,n})}$ that preserve the conjugacy classes $[\gamma_j]$ and $[\lambda]$. We set $\Out_c(\widehat{\pi_1(S_{g,n})}) = \Aut_c(\widehat{\pi_1(S_{g,n})})/\Inn(\widehat{\pi_1(S_{g,n})})$. This extension is defined as before, though some care is needed in showing $\widehat{\delta}$ is a homomorphism. The result of the construction of $\widehat{\delta}$ yields a relationship similar to Lemma \ref{SequenceConnectionLemma} and is equivalent to our Step 1. The final ingredient needed is the fact that $Z(\widehat{\pi_1(S_{g,n})})$ is trivial, which is equivalent to Lemma \ref{CenterlessLemma}. Indeed, we have a sequence
\[ \xymatrix{ \widehat{\pi_1(S_{g,n-1})} \ar[r]^{\widehat{\Push}} & \Out_c(\widehat{\pi_1(S_{g,n})}) \ar[r]^{\widehat{\rho_{N_\lambda}^\ast}} & \Out_c(\widehat{\pi_1(S_{g,n-1})}) \ar[r] & 1.} \]
Proposition \ref{MainProp} is equivalent to the injectivity of $\widehat{\Push}$. The homomorphism $\widehat{\delta}$ relates this profinite version of (\ref{2}) to the profinite version of (\ref{InnerSequence})
\[ \xymatrix{ \widehat{\pi_1(S_{g,n-1})} \ar[r]^{\widehat{\mu}} & \Aut_c(\widehat{\pi_1(S_{g,n-1})}) \ar[r] & \Out_c(\widehat{\pi_1(S_{g,n-1})}) \ar[r] & 1.} \]
Specifically, the relationship is
\begin{equation}\label{ProfiniteConnection}
\widehat{\mu} = \widehat{\delta} \circ \widehat{\Push}.
\end{equation}
Thus, the injectivity of $\widehat{\Push}$ follows from the triviality of $Z(\widehat{\pi_1(S_{g,n-1})})$. Note that it is not obvious that (\ref{ProfiniteConnection}) holds and this was established in \cite{NT}. The content of Step 1 and parts of Step 2 reprove (\ref{ProfiniteConnection}). Boggi's proof \cite[p. 4--5]{Boggi2} is essentially the same Asada's proof though with different language and different notation that might initially veil the similarities. His analysis of centralizers in $\widehat{\pi_1(S_{g,n})}$ is different as he makes use of cohomological dimension and Shapiro's Lemma. Like the other two proofs, he also makes use of the homomorphism $\widehat{\delta}$. To summarize, in all of the proofs mentioned above, the main thrust is the reduction of Proposition \ref{MainProp} to a group theoretic statement like Lemma \ref{MainGroupLemma} followed by an argument that controls centers like Lemma \ref{CenterlessLemma}.\smallskip\smallskip

\nid The proof given by Diaz--Donagi--Harbater \cite{Num} also requires control of symmetries and a generalization of a group theoretic analog of their proof is given in \cite{EM}. However, their proof is sufficiently different from the rest as it is more geometric in nature.\smallskip\smallskip

\nid Boggi's general framework for the congruence subgroup problem introduced in \cite{Boggi1} and \cite{Boggi2} is a step in resolving the congruence subgroup problem in general. Despite the gap in \cite{Boggi1}, his work has introduced new tools and also he proves results that may be of independent interest to algebraic geometers, geometric group theorists, and geometers. Those with interests in these fields should study his work at far greater depth than what has been presented in this article.


\noindent Department of Mathematics \\
University of Chicago \\
Chicago, IL 60637 \\
email: {\tt dmcreyn@math.uchicago.edu}



\begin{thebibliography}{9999}

\bibitem{Asada}
M.~Asada, \emph{The faithfulness of the monodromy representations associated with certain families of algebraic curves}, J. Pure Appl. Algebra \textbf{159} (2001), 123--147.

\bibitem{BL}
H.~Bass and A.~Lubotzky, \emph{Automorphisms of groups and of schemes of finite type}, Israel J. Math. \textbf{44} (1983), 1--22.

\bibitem{Boggi1}
M.~Boggi, \emph{Profinite Teichmüller theory}, Math. Nachr. \textbf{279} (2006), 953--987.

\bibitem{Boggi2}
M.~Boggi, \emph{The congruence subgroup property for the hyperelliptic modular group}, preprint (2008).

\bibitem{BER}
K.-U.~Bux, M.~Ershov, and A.~Rapinchuk,
\emph{The congruence subgroup property for $\Aut(F_2)$: a group-theoretic proof of Asada's theorem}, preprint.

\bibitem{Num}
S.~Diaz, R.~Donagi, and D.~Harbater, \emph{Every curve is a Hurwitz space}, Duke Math. J. \textbf{59} (1989), 737--746.

\bibitem{EM}
J.~S.~Ellenberg and D.~B.~McReynolds, \emph{Every curve is a Teichm\"{u}ller curve}, preprint.

\bibitem{FM}
B.~Farb and D.~Margalit, \emph{A primer on mapping class groups}.

\bibitem{FH}
W.~Fulton and J.~Harris, \emph{Representation Theory}, Springer-Verlag 1991.

\bibitem{Ivanov}
N.~V.~Ivanov, \emph{Fifteen problems about the mapping class groups}. Problems on mapping class groups and related topics, Proc. Sympos. Pure Math. \textbf{74} (2006), 71--80.

\bibitem{Kent}
R.~P.~Kent, \emph{Topology of profinite curve complexes of doubly–punctured surfaces}, in preparation.

\bibitem{BK}
R.~P.~Kent and B. Wieland, \emph{Approaching the congruence subgroup problem for mapping class groups}, in preparation.

\bibitem{LS}
A.~Lubotzky and Y.~Shalom, \emph{Finite representations in the unitary dual and ramanujan groups}, Contemporary Math. \textbf{347} (2004), 173--189.

\bibitem{NT}
H.~Nakamura and H.~Tsunogai, \emph{Some finiteness theorems on Galois centralizers in pro-$l$ mapping class groups},
J. Reine Angew. Math. \textbf{441} (1993), 115--144.

\bibitem{Thurston}
W.~P.~Thurston, \emph{Commmunication to N. Dunfield}, (2002).

\end{thebibliography}
\end{document}